\documentclass{lms}
\usepackage{amssymb}
\usepackage{amsmath}
\usepackage{mathabx}
\usepackage{hyperref}
\usepackage{graphicx}
\usepackage{mathrsfs}
\usepackage[curve]{xypic}
\usepackage{tikz}
\usetikzlibrary{arrows,decorations,shapes,shadows}
\usepackage{verbatim}
\usepackage[normalem]{ulem}
\newcommand{\grassman}{\mathbf G}

\let\cal\mathcal
\renewcommand{\setminus}{\smallsetminus}

\newcommand\C{{\mathbb C}}

\newcommand\N{{\mathbb N}}

\newtheorem{theorem}{Theorem}[section]
\newtheorem{proposition}[theorem]{Proposition}
\newtheorem{corollary}[theorem]{Corollary}
\newtheorem{lemma}[theorem]{Lemma}

\newnumbered{amalgamation}[theorem]{Amalgamation}
\newnumbered{example}[theorem]{Example}
\newnumbered{remark}[theorem]{Remark}
\newnumbered{definition}[theorem]{Definition}

\renewcommand{\int}{\mathop{\rm int}}

\newcommand{\norm}[1]{\lVert #1 \rVert}

 \usepackage{color}
  \usepackage{color}
 \usepackage{color}
 \usepackage{color}
 \usepackage{color}
 \usepackage{color}

\title[Minimal singularities are Lipschitz normally embedded]{Minimal surface singularities are  Lipschitz  normally embedded}
\author{Walter D Neumann, Helge M\o ller Pedersen, Anne Pichon}
\classno{14B05 (primary), 32S25, 32S05, 57M99 (secondary)}
\begin{document}
\maketitle

\begin{abstract} 
Any germ of a complex analytic space  is  equipped with two natural metrics: the {\it outer metric}  induced by the hermitian metric of the ambient space and the {\it inner metric}, which is the associated riemannian metric on the germ. We  show that  minimal surface singularities  are Lipschitz normally embedded (LNE),  i.e., the identity map is a bilipschitz homeomorphism between outer and inner metrics, and that they are the only rational surface singularities with  this property.  
\end{abstract}

\section{Introduction}

If $(X,0)$ is a germ of a complex  analytic space of pure dimension $ \dim (X,0)$, we denote by $m(X,0)$ its multiplicity and by $\mathop{\rm edim}(X,0)$ its embedding dimension. 

Minimal  singularities were introduced by J. Koll\'ar in \cite{K} as the germs of complex analytic spaces $(X,0)$ of pure dimension which are reduced,  Cohen-Macaulay,  whose tangent cone is reduced and whose  multiplicity is minimal in the sense that  Abhyankar's inequality 
$$m(X,0) \geq \mathop{\rm edim}(X,0) - \dim (X,0) + 1$$
is an equality (see \cite[Section 3.4]{K} or  \cite[Section 5] {BL}). 

In this paper, we  only deal with normal surfaces. In this case, minimality can be  defined as follows (\cite[Remark 3.4.10]{K}): a normal surface singularity $(X,0)$ is {\it minimal} if it is  rational   with a reduced minimal (also called fundamental) cycle. 

Minimal surface singularities play a key role in resolution theory of normal complex surfaces since they appear as central objects in the two main resolution algorithms: the resolution obtained as a finite sequence of normalized Nash modifications (\cite{S}), and the one obtained by a sequence of normalized blow-up of points (\cite{Z1939}, \cite{BL}). The question of the existence of a duality between these two algorithms, asserted by D. T. L\^e in \cite[Section 4.3]{L0} (see also \cite[Section 8]{BL}) remains  open, and the fact that minimal singularities seem to be the common denominator between them suggests the need of   a better understanding of  this class of surface germs.

In this paper, we study minimal surface singularities from the point of view of their Lipschitz geometries, and we show that they are characterized by a  remarkable metric property: they are Lipschitz normally embedded. Let us explain what this means.  

If $(X,0)$ is a germ of a complex variety, then any embedding $\phi\colon(X,0)\hookrightarrow
(\C^n,0)$ determines two metrics on $(X,0)$: the outer metric
$d_o(x_1,x_2):=\norm{\phi(x_1)-\phi(x_2)}$ 
    (i.e., distance in $\C^n$) and the inner metric
$d_i(x_1,x_2)$ defined as the length metric induced on $X$ by the  hermitian metric of $\C^n$. For all $x,y \in X, d_0(x,y) \leq d_i(x,y)$. 

\begin{definition} A germ of a complex   variety $(X,0)$ is {\it Lipschitz normally embedded} (LNE) if the identity map of $(X,0)$ is a bilipschitz homeomorphism between inner and outer metrics,  i.e.,  there exists a neigbourhood $U$ of $0$ in $X$ and a constant $K\geq 1$ such that for all $x,y \in U$
$$\frac{1}{K} d_i(x,y) \leq d_0(x,y).$$
\end{definition}

Let us now state our  main result:  
  
 \begin{theorem} \label{theorem:main}
 A rational surface singularity is  LNE if and
 only if is minimal. 
\end{theorem}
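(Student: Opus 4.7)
The plan is to reduce both implications of the theorem to the preliminary general characterization of Lipschitz normally embedded normal surface singularities promised in the abstract. That criterion should express the LNE property in terms of data read off from the minimal good resolution $\pi\colon \tilde X\to X$ and a generic linear projection $\ell\colon (X,0)\to (\C^2,0)$, presumably via \carrousel\ associated to the discriminant of $\ell$, so that LNE becomes equivalent to the vanishing of certain ``Lipschitz rates'' attached to the exceptional divisor of $\pi$. With such a criterion in hand, the theorem becomes a check of how the combinatorics of the resolution graph and of the fundamental cycle $Z$ interact with these rates for rational surface singularities.

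For the implication minimal $\Rightarrow$ LNE, I would start with a minimal surface singularity, whose minimal good resolution satisfies $Z=\sum E_i$ (reduced fundamental cycle) and whose graph is rational with all self-intersections $\leq -2$ and vertices satisfying the well-known Koll\'ar minimality conditions. Since $Z$ is reduced, the pull-back of a generic hyperplane section has multiplicity $1$ along every component $E_i$. I would then analyse the polar curve $\Pi$ of $\ell$: the minimality assumption should force the strict transform of $\Pi$ to meet only the ``$L$-nodes'' of the resolution graph in a controlled way, so that the associated \carrousel\ has every piece bilipschitz equivalent to a metrically conical $D$-piece with rate $1$. Plugging this into the preliminary characterization yields LNE.

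For the converse, suppose $(X,0)$ is rational but not minimal, so $Z=\sum m_i E_i$ with some $m_{i_0}\geq 2$. I would argue contrapositively by producing an obstruction to LNE concentrated at $E_{i_0}$. Choosing two smooth test arcs $\gamma_1,\gamma_2\subset \tilde X$ which cut $E_{i_0}$ transversely at two distinct smooth points and projecting to $X$, one obtains sequences $x_n\in\pi(\gamma_1)$ and $y_n\in\pi(\gamma_2)$ tending to $0$. Their inner distance decays like the distance to $0$ (since they are connected through the $E_{i_0}$-piece of the carrousel), but the fact that the generic hyperplane section has multiplicity $m_{i_0}\geq 2$ along $E_{i_0}$ means that a suitable coordinate of $\ell$ collapses these sequences faster than the inner metric, forcing
\[
\frac{d_{out}(x_n,y_n)}{d_{inn}(x_n,y_n)}\longrightarrow 0,
\]
which contradicts LNE. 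The existence of this configuration is exactly what the preliminary characterization should guarantee when some $m_i\geq 2$.

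The main obstacle is the second half of the argument: one must show that for every rational, non-minimal surface singularity the combinatorial defect ``some $m_i\geq 2$'' really translates into a geometric pair of sequences witnessing a failure of LNE, and that this cannot be repaired by any choice of embedding or projection. This requires the full strength of the preliminary characterization together with a careful understanding of how the multiplicities of $Z$ control the polar curve of a generic projection on a rational singularity. Once this link between the Nash/polar data and the fundamental cycle is made precise, both directions of Theorem~\ref{theorem:main} reduce to combinatorial verifications on the dual graph of the minimal good resolution.
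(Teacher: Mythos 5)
Your sketch of the direction ``minimal $\Rightarrow$ LNE'' rests on a claim that is false: minimality does \emph{not} force the carrousel/geometric decomposition to consist of metrically conical rate-$1$ pieces, and Lipschitz normal embedding is not equivalent to metric conicalness. By Spivakovsky's description (Theorem \ref{thm:Pi}) the polar curve of a generic projection of a minimal singularity meets exceptional curves far from the $\mathcal L$-nodes (at central vertices and central edges), and the inner rates of the resulting pieces are $s(v)$ or $s(v)-1/2$ (Corollary \ref{innergeomertythm}), which can be arbitrarily large --- Example \ref{example} has rates $2$, $5/2$ and $3$. What LNE requires is not that these rates be trivial but that the \emph{outer} rates of liftings of test curves coincide with the \emph{inner} rates; this is exactly the content of conditions $(\ast1)$ and $(\ast2)$ of Theorem \ref{prop:characterization of normal embedding}. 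Verifying those conditions for minimal singularities is the substantive part of the proof, and it is done by induction on the inner rate of the test curve under blow-up of the maximal ideal, using Proposition \ref{lem:generic2} (the blown-up germ is again minimal and the induced projection is again generic) together with the Bondil--Spivakovsky description of $\Pi$ and $\Delta$. Your proposal contains no substitute for this induction.

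For the converse your obstruction argument is aimed at the wrong place and, crucially, never uses rationality, whereas rationality is essential: the remark opening Section \ref{sec:rational + NE implies minimal} exhibits a non-rational surface singularity that is Lipschitz normally embedded but not minimal, so no argument deriving ``not LNE'' purely from the geometry at a component of the fundamental cycle with coefficient $\geq 2$ can be correct as stated. Two concrete defects: if $E_{i_0}$ is not an $\mathcal L$-curve the generic hyperplane section does not pass through it at all, so ``the generic hyperplane section has multiplicity $m_{i_0}\geq 2$ along $E_{i_0}$'' gives no direct collapse of outer distances; and the inner distance between your two arcs decays like $t^{q}$ where $q$ is the inner rate of the $E_{i_0}$-piece, not like $t$. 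The paper's argument is different and purely combinatorial once one geometric fact is extracted: LNE forces the branches of the generic hyperplane section to be smooth (condition $(\ast1)$ for the rate-$1$ test curve), hence the fundamental cycle $Z$ has coefficient $1$ along every $\mathcal L$-curve; then Laufer's algorithm combined with Laufer's rationality criterion shows that for a \emph{rational} singularity the last curve added is always an $\mathcal L$-curve, and hence that $Z$ must be reduced, i.e.\ $(X,0)$ is minimal. You would need to supply both the reduction to $\mathcal L$-curves and this Laufer-type combinatorial step.
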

 
 The  proof is based on the characterization of LNE of normal surface singularities proved in \cite{NPP} {(see Theorem \ref{cor:complex characterization of normal embedding})}. We recall the statement in Section \ref{sec:characterization LNE}. 
 In  Section \ref{sec:minimal singularities}, we recall the definition of minimal singularities and we present the explicit description of the generic polar and
 discriminant curves of minimal surface singularities given  in
 \cite{S},  \cite{B1} and \cite{B2}  which will be used in the  next sections. In Section \ref{sec:inner rates}, we prove  results which will be used in   the proof of   the ``if" direction of Theorem \ref{theorem:main} in Section \ref{sec: main proof}.  The other
 direction of Theorem \ref{theorem:main} is proved in Section \ref{sec:rational + NE implies minimal}.
 
\vskip.1cm\noindent{\bf Acknowledgments.}  
Neumann was supported by NSF grant DMS-1608600.  Pedersen was supported
by FAPESP grant 2015/08026-4. Pichon was supported by the ANR project  LISA  17-CE40--0023-01  and by USP-Cofecub UcMa163-17. We are  very grateful for the hospitality and support of the following institutions: Columbia University, Institut de Math\'ematiques de Marseille,  FRUMAM Marseille, Aix Marseille Universit\'e and IAS Princeton.  We are very grateful to Jawad Snoussi and Bernard Teissier for helpful conversations, as well as the referee for helpful comments. 

\section{Characterization of LNE of a surface singularity}\label{sec:characterization LNE}

\subsection{Generic projections}
 
Let $\cal D$ be a $(n-2)$-plane in $\C^n$ and let $\ell_{\cal D}
\colon \C^n \to \C^2$ be the linear projection with
kernel $\cal D$. Suppose $(C,0)\subset (\C^n,0)$ is a complex curve germ.  There exists an open
dense subset $\Omega_C$  in the Grassmanian $ \grassman(n-2,\C^n)$  such that for $\cal D \in \Omega_C$, $\cal D$ contains no limit of secant lines to the curve $C$ (\cite{teissier}). 
 The  projection $\ell_{\cal D}$ is  said  to be \emph{generic for $C$} if  $\cal D \in \Omega_C$.

 Let now $(X,0)\subset (\C^n,0)$ be a normal surface singularity. We restrict ourselves to those $\cal D$ in the
Grassmanian $\grassman(n-2,\C^n)$ such that the restriction
$\ell_{\cal D}{\mid_{(X,0)}}\colon(X,0)\to(\C^2,0)$ is finite.
The \emph{polar curve}
$\Pi_{\cal D}$ of $(X,0)$ for the direction $\cal D$ is the closure in
$(X,0)$ of the singular locus of the restriction of $\ell_{\cal D} $
to $X \setminus \{0\}$. The \emph{discriminant curve} $\Delta_{\cal D}
\subset (\C^2,0)$ is the image $\ell_{\cal D}(\Pi_{\cal D})$ of the polar
curve $\Pi_{\cal D}$.

\begin{proposition}[{\cite[Lemme-cl\'e V 1.2.2]{teissier}}]\label{prop:generic} An open
dense subset $\Omega \subset \grassman(n-2,\C^n)$ exists 
such that: 
\begin{enumerate}
\item \label{cond:generic1} the family of  curve germs  $(\Pi_{\cal D})_{\cal D \in \Omega}$
is equisingular in terms of strong simultaneous resolution;
\item \label{cond:generic2} the discriminant curves  $ \Delta_{\cal D}=\ell_{\cal D}(\Pi_{\cal D})$, ${\cal D \in \Omega}$, form an equisingular family of reduced plane curves;
\item \label{cond:generic3}  for each $\cal D$, the projection $\ell_{\cal D}$ is generic for its polar curve $\Pi_{\cal D}$. 
\end{enumerate}
  \end{proposition}

\begin{definition}  \label{def:generic linear projection} 
The projection $\ell_{\cal D} \colon \C^n \to \C^2$
  is \emph{generic} for $(X,0)$ if $\cal D \in \Omega$.
\end{definition}

\subsection{Test curves} \label{subsec:rho}
  
Let  $\ell \colon (X,0) \to (\C^2,0)$ be a generic projection, let $\Pi $ be its polar curve and let $\Delta = \ell(\Pi)$ be its discriminant curve. Denote by $\rho'_{\ell} \colon Y_{\ell} \to \C^2$ the minimal composition of blow-ups of points starting with the blow-up of the origin which resolves the base points of the family of projections of generic polar curves  $(\ell(\Pi_{\cal D}))_{\cal D \in \Omega}$.

Let $E \subset Y$ be a complex curve in a complex surface $Y$ and let $E_1,\ldots,E_n$ be the irreducible components of $E$. We say {\it curvette} of $E_i$ for any smooth curve germ $(\beta,p)$ in $Y$, where $p$ is a point of $E_i$ which is a smooth point of $Y$ and $E$ {and} such that  $\beta$ and $E_i$ intersect transversely. 

If $G$ is a graph, we will denote by $V(G)$ its set of vertices and by $E(G)$ its set of edges.

\begin{definition}\label{def:test curve}
  We say \emph{$\Delta$-curve} for an exceptional curve in
    {$(\rho'_{\ell})^{-1}(0)$} intersecting the strict transform of $\Delta$.  
     Let us blow up all the intersection points between two
   $\Delta$-curves. We denote by $\sigma \colon Z_{\ell} \to Y_{\ell}$ and
   $\rho_{\ell}=\rho'_\ell\circ \sigma\colon Z_\ell\to\C^2$ the
   resulting morphisms (if no $\Delta$-curves intersect,
   $\rho_ {\ell} =  {\rho'_{\ell}}$).   The resolution graph $T$ of $\rho_{\ell}$ does not depend on $\ell$.  
  
      A  {\it $\Delta$-node} of $T$ is a vertex   of $T$ which represents a $\Delta$-curve. Let $T'$ be the subtree of $T$ defined as the union of all the simple paths in $T$ joining the root vertex to $\Delta$-nodes (so the complement $T \setminus T'$ consists of strings of valency $2$ vertices ended by a valency $1$ vertex). 

For $(i) \in V(T)$, let $C_i$  be the irreducible component of $\rho_{\ell}^{-1}(0)$ represented by $(i)$, so we have $\rho_{\ell}^{-1}(0) = \bigcup_{(i) \in V(T)} C_i$. 
Let $(i) \in V({T})$. We call \emph{test curve at $(i)$} (of $\ell$)  any complex curve germ $(\gamma,0) \subset (\C^2,0)$ such that 
\begin{enumerate}
\item $(i) \in V(T')$;
\item the strict transform $\gamma^*$   by  $\rho_\ell$ is a curvette of $C_i$  intersecting $C_i$ at a smooth point of $\rho_{\ell}^{-1}(0)$;
\item $\gamma^* \cap \Delta^* = \emptyset$. 
\end{enumerate}
\end{definition}

\subsection{Nash modification and lifted Gauss map}
\begin{definition} \label{def:lifted Gauss map}
Let $\lambda \colon X\setminus\{0\} \to \grassman(2,\C^n)$ be the Gauss  map,  
which sends $x \in X\setminus\{0\}$ to the tangent plane $T_xX$. The
closure $N X$ of the graph of $\lambda$ in
$X \times \grassman(2,\C^n)$ is a reduced analytic surface. By
definition, the \emph{Nash modification} of $(X,0)$ is the induced
morphism $\mathscr N \colon NX \to X$. The \emph{lifted Gauss map} is the morphism
$\widetilde{\lambda} \colon NX \to \grassman(2,\C^n) $ defined as the
restriction to $NX$ of the projection of $X \times \grassman(2,\C^n)$
on the second factor. 
\end{definition}
 
\begin{lemma}[{\cite[Part III, Theorem 1.2]{S}}, {\cite[Section
    2]{GS}}]\label{le:nash}
  A morphism $f \colon Y \to X$ factors through Nash modification if
  and only if it has no base points for the family of polar curves.
\end{lemma}

\subsection{Principal components}
\begin{definition} \label{def:node G resolution}  Let $\pi_0 \colon
  X_0 \to X$ be the minimal good  resolution of $X$ which factors
  through both the Nash modification and the blow-up of the maximal
  ideal and let $G_0$ be its resolution graph. For each vertex
  {$(v)\in V(G_0)$}  we denote by $E_v$ the corresponding  irreducible
  component of $\pi_0^{-1}(0)$. A vertex {$(v)\in V(G_0)$} such that $E_v$ is an irreducible component of the blow-up of the maximal ideal (resp.\ an exceptional curve of the Nash modification)  is called an {\it $\cal L$-node} (resp.\ a {\it $\cal P$-node}) of $G_0$. 
\end{definition}

\begin{definition}\label{def:G'} Consider the subgraph $G'_0$ of $G_0$ defined as the union of all simple paths in  $G_0$ connecting  pairs of  vertices among $\cal L$- and $\cal P$-nodes. Let $\ell \colon (X,0) \to (\C^2,0)$ be a generic projection and let  $\gamma$ be a test curve for $\ell$. A component $\widehat{\gamma}$ of $\ell^{-1}(\gamma)$ is called {\it principal} if its  strict transform by $\pi_0$ {either is a curvette of a component $E_v$ with $(v) \in V(G'_0)$ or intersects $\pi_0^{-1}(0)$ at an intersection between two exceptional curves  $E_v$ and $E_{v'}$ such that both  $(v)$ and $(v')$ are  in $V(G'_0)$.   }
\end{definition}

\subsection{Inner and outer contact exponents}  We will  use the ``big-Theta" asymptotic notation of Bachman-Landau:  given two function germs $f,g\colon ([0,\infty),0)\to ([0,\infty),0)$ we say $f$ is \emph{big-Theta} of $g$ and we write   $f(t) = \Theta (g(t))$ if there exist real numbers $\eta>0$ and $K \ge 1$ such that for all $t$ with $f(t)\le\eta$,  $\frac{1}{K }g(t) \leq f(t) \leq K g(t)$.

  Let $\mathbb S^{2n-1}_{\epsilon} = \{ x \in \C^n \colon  \norm{x}_{\C^n} = \epsilon\}$.  
  
  \begin{definition}
  Let $(\gamma_1,0)$ and $(\gamma_2,0)$ be two germs of complex curves in $(\C^{n},0)$.  The {\it outer  contact exponent} between $\gamma_1$ and $\gamma_2$ is the rational number $q_{out}=q _{out}(\gamma_1, \gamma_2) \geq 1$ defined by: $ d_o(\gamma_1 \cap \mathbb S^{2n-1}_{\epsilon}, \gamma_2 \cap \mathbb S^{2n-1}_{\epsilon}) =  \Theta(\epsilon^{q_{out}})$. 
  \end{definition}
  
    \begin{definition} Let $(X,0)$ be a complex surface germ and let $(\gamma_1,0)$ and $(\gamma_2,0)$ be two germs of complex curves  
  inside $(X,0)$.  The {\it inner   contact exponent} between $\gamma_1$ and $\gamma_2$ on $(X,0)$ is the rational number $q_{inn}=q _{inn}(\gamma_1, \gamma_2) \geq 1$ defined by: $ d_i(\gamma_1 \cap \mathbb S^{2n-1}_{\epsilon}, \gamma_2 \cap \mathbb S^{2n-1}_{\epsilon}) =  \Theta(\epsilon^{q_{inn}})$, where $d_{i}$ means inner distance in $(X,0)$ as before.  
\end{definition}

We are now ready to state {the characterization theorem for LNE for 
 a complex normal surface germ. {The version we give here is a sightly weaker version than  \cite[Theorem 3.8]{NPP}, in which we use a restricted version of test curves (the so-called nodal test curves). We just need this weaker version to prove LNE for minimal  surface singularities.}

\begin{theorem}
  \label{cor:complex characterization of normal embedding} A normal surface germ  $(X,0)$  is LNE if and only if the following conditions are satisfied for {all  generic projections} $\ell \colon (X,0) \to (\C^2,0)$ and  test curves $(\gamma,0) \subset (\C^2,0)$: 
\begin{itemize}
\item[($1^*$)] \label{iso}  {for each  principal component $\widehat{\gamma}$ of $\ell^{-1}(\gamma)$, $mult(\widehat{\gamma})=mult({\gamma})$ where $mult$ means multiplicity at $0$; }
\item[($2^*$)] \label{vertical} for {all pairs} $(\widehat{\gamma}_1,\widehat{\gamma}_2)$ of distinct
principal components of $\ell^{-1}(\gamma)$,
  $q_{inn}(\widehat{\gamma}_1,\widehat{\gamma}_2) = q_{out}(\widehat{\gamma}_1,\widehat{\gamma}_2)$.
\end{itemize}
 \end{theorem}}

\section{Minimal Singularities} \label{sec:minimal singularities}

In this section, we give the definition of minimal singularities and specify the case of surfaces. We then present the description by Spivakovsky  (\cite{S}) of the minimal resolution which  factors through Nash modification  and  the description by Bondil (\cite{B1,B2}) of the {morphism} $\rho'_{\ell} \colon Y_{\ell} \to \C^2$ introduced in Section \ref{subsec:rho}, where $\ell \colon (X,0)\to (\C^2,0)$ is a fixed generic plane projection.  

 Let us first recall  the definition  of  the minimal  cycle $Z_{min}$  (also called Artin fundamental cycle) of a normal surface singularity $(X,0)$. Let $\pi \colon{(\widetilde X,E)}\to{(X,0)}$ be  the minimal resolution  of $X$ and let $E_1,\ldots,E_r$ be the irreducible components of the exceptional divisor $E=\pi^{-1}(0)$.    The {\it minimal cycle} $Z_{min}$ is the minimal element of the set of divisors $Z=\sum_{i=1}^r m_i E_i$    whose coefficients $m_i$ are strictly positive integers and such that $\forall j=1,\ldots,r,$ $Z\cdot E_j \leq 0$. A {\it reduced} minimal cycle means that $Z_{min}=\sum_i^r E_i$, i.e.,  $m_i = 1$ for all  $i=1,\ldots,r$.  
 
 If $f\colon{(X,0)}\to{(\C,0)}$ is  an analytic function, then its total transform $(f)=(f  \circ \pi)^{-1}(0)$ decomposes into $(f) = Z(f) + f^*$ where $f^*$ is the strict transform and  $Z(f)$ a positive divisor with support on $E$. {If $\pi$  is a good resolution of $(X,0)$ (i.e., $\pi^{-1}(0)$ consists of smooth transversal irreducible components intersecting transversely at double points)}, then for each $j=1,\ldots,r$, one has  $(f)\cdot  E_j = 0$ {(\cite{La2})}. Hence  $Z(f)\cdot E_j \leq 0$ for all $j=1,\ldots,r$. If $h \colon (X,0) \to (\C,0)$ is a generic linear form, then $Z(h)$ is the minimal element among divisors $Z(f)$, and $Z_{min}\leq Z(h)$.  
 
 Recall that if $(X,0)$ is a  rational surface singularity, then its minimal resolution $\pi$ is a good resolution,  $\pi$ resolves the basepoints of the family of generic linear forms (or equivalently, it factors through the blow-up of the maximal ideal),  and $Z(h)=Z_{min}$ (\cite{A}). 
 
 We now recall the characterization of minimal surface singularities proved by Koll\`ar:
 
 \begin{proposition}[{\cite[Remark 3.4.10]{K}}] \label{prop:minimal iff rational and reduced fundamental cycle}
 A normal surface singularity is minimal if and only if it is rational with reduced fundamental cycle. 
 \end{proposition}


In \cite{S}, Spivakovsky gives the following  combinatorial characterization of the dual resolution graph of minimal singularities. Let  $(X,0)$ be a normal surface singularity, let ${\pi'}\colon X' \to{X}$ be the minimal good resolution of $(X,0)$ and let  $G$ be its dual graph.   If $(v) \in V(G)$, we  denote by $E_v$ the corresponding irreducible component of the exceptional divisor $(\pi')^{-1}(0)$, we set  $w(v)=E_v^2$  and we denote by $\nu(v)$ the valence of $(v)$, i.e., the number of edges {in $G$} adjacent to $(v)$.  

\begin{proposition}[{\cite[{Part II, Remark 2.3}]{S}}]  \label{prop:minimal} A surface singularity is minimal if and only if $G$ is a tree of rational curves and for all vertices $(v)\in V(G)$, $-w(v)\geq \nu(v)$ (in which case, $\pi'$ coincides with $\pi$.)
\end{proposition}

\begin{remark}\label{rk:L-nodes}
A consequence of Proposition \ref{prop:minimal} is that if  $(X,0)$ is minimal, then the $\cal L$-nodes (Definition \ref{def:node G resolution}) in $G$ are the vertices $(v)$ such that  $-w(v)> \nu(v)$. In particular, it implies that every leaf of $G$ is an $\cal L$-node.  
 \end{remark}

Spivakovsky introduced the function $s \colon{V(G)}\to{\N}$ defined as follows: $s(v)$ is 
 the number of vertices on the shortest path  {in $G$} from $(v)$   to an $\cal L$-node. So $s(v)=1$ if and only if $(v)$ is an $\cal L$-node. Since minimal singularities
are rational they can be resolved by only blowing up points, as
Tjurina showed in \cite{Tyu}, and $s(v)$ is the number of blow-ups it
takes before $E_v$ appears in the successive exceptional
divisors.

 We now state a result of Spivakovsky in a formulation inspired by Bondil in \cite{B1} which will enable one to describe $\pi_0$ and $G_0$ from the graph $G$. 
 
\begin{theorem}[{\cite[Part III, Theorem 5.4]{S}}] \label{thm:Pi} \label{thm:spi} Let $(X,0)$  be a minimal surface singularity. Let  $\ell \colon (X,0) \to (\C^2,0)$ be a generic projection and let $\Pi$ be its polar curve. Let ${\pi} \colon \widetilde{X}\to X $ be the minimal resolution of $(X,0)$. Consider the  cycle $S:=\sum s(v)E_v$, where the $E_v$ are the irreducible components of $\pi^{-1}(0)$.   Then the strict transform $\Pi^*$ of   $\Pi$ by ${\pi}$ is smooth. It  consists of  exactly   $-(S+E_v)\cdot E_v-2$ curvettes of each $E_v$,  one component of which goes through each intersection  point $E_v \cap E_w$ for which $s(v)=s(w)$. Moreover, the latter intersection points are the only basepoints of the family of generic polars $(\Pi_{\cal D})_{\cal D \in \Omega}$ and each of them is resolved by  one blow-up. 
\end{theorem}

\begin{definition}
Following the terminology of \cite{S}, an edge of
    $G$ joining two vertices $(v)$ and $(w)$ is \emph{central} if
    $s(v)=s(w)$, and a vertex $(v)$ is  \emph{central} if there are at
    least two neighbouring vertices $(w),(w')$ such that
    $s(v)-1=s(w)=s(w')$.
\end{definition}

\begin{remark}   Let $(v)$ be a non-central vertex which is not
    an $\cal L$-node and not adjacent to a central edge. {Set $r = \nu(v)-1$}.  Then
      $(v)$ has one neighbour vertex $(v_0)$ with 
    $s(v_0)=s(v)-1$, and $r$   neighbour vertices $(v_1),\dots,(v_r)$
    satisfying $s(v_i)=s(v)+1$ for all $i$. Let $h \colon (X,0) \to
    (\C,0)$ be a  generic linear form and let $(h)$ be its total
    transform by  $\pi$. Since the minimal cycle of $(X,0)$ is
    reduced, the equality $(h)\cdot E_v=0$ gives $E_v^2+r+1=0$, i.e.,
    $E_v^2=-(r+1)$. We then have $-(S+E_v)\cdot E_v-2 =
    -\sum_{i=1}^r s(v_i)E_{v_i}\cdot E_v - s(v)E_v^2-s(v_0)E_{v_0}\cdot E_v -E_v^2-2 = 0$.

Using this, Theorem \ref{thm:Pi} says that for each central edge  there is one component of $\Pi^*$ through the intersection point of the corresponding curves and that for each central vertex $(v)$, there is at least one component of $\Pi^*$ which is a curvette of $E_v$.  {Any other component of $\Pi^*$ goes through $\cal L$-curves. }
\end{remark}

As a consequence of Lemma \ref{le:nash} and Theorem \ref{thm:Pi}, we  obtain the following  explicit description of $\pi_0$ and of $G_0$. (In \cite{B1}, Bondil shows that $\pi_0$ is actually the minimal resolution of $(X,0)$ obtained by only blowing up points.)
 
 \begin{corollary} \label{cor:minimal resolution} The  minimal good resolution $\pi_0 \colon X_0 \to X$ of $X$ which factors through Nash modification and the blow-up of the maximal ideal is obtained by composing $\pi$ with the blow-up of  each intersection  point $E_v \cap E_w$ corresponding to a central edge.   
\end{corollary}

\begin{remark}\label{rk:G_0=G'_0}{Recall (Definition \ref{def:G'}) that  $G'_0$ is the subgraph of $G_0$ defined as the union of all simple paths in  $G_0$ connecting  pairs of  vertices among $\cal L$- and $\cal P$-nodes. A consequence of Corollary  \ref{cor:minimal resolution} and of Remark \ref{rk:L-nodes} is that for a minimal singularity, we have $G'_0=G_0$.} \end{remark}

 We now present a more precise description of the polar curve and of the discriminant curve given by Bondil in \cite{B1} which will lead to the explicit description of the resolution tree $T_0$ of $\rho'_{\ell}\colon Y_{\ell} \to \C^2$ and its relation with $G_0$ in Corollary \ref{cor:graph map}.  

An  $A_n$-\emph{curve}  is a germ of an analytic curve isomorphic to the plane curve $y^2+x^{n+1}=0$. If $n$ is  odd, then $A_n$ consists of  a pair of smooth curves with  contact exponent $\frac{n+1}{2}$ while  if $n$ is  even, $A_n$ is an irreducible curve.

\begin{theorem}[{\cite{B1,B2}}] \label{genericpolar}  
Let $(X,0)$ be a minimal singularity and let $\Pi$ be the polar of a generic linear projection. Then  
\begin{enumerate}
\item $\Pi$ decomposes as a union of $A_{n_i}$-curves $\Pi=\bigcup_i \Pi_i$  and each $\Pi_i$ meet a single irreducible component $E_{v_i}$ of the exceptional divisor of $\pi_0$
\item If   {$E_{v_i}$  comes from blowing up a central edge $(v'_i)-(v''_i)$, then $\Pi_i$ is an (irreducible) $A_{2s(v'_i)}$-curve.}   Otherwise $\Pi_i$ consists of two smooth curves forming an $A_{2s(v_i)-1}$-curve. 
 \item \label{item:contact0} The contact exponent between $\Pi_i$ and $\Pi_j$ equals the minimal value   of $s(v)$  on the shortest path in $G_0$ between the vertices ${v_i}$ and ${v_j}$. 
\end{enumerate}
  \end{theorem}

\begin{example}\label{example0} 
{
Let $(X,0)$ be a minimal singularity  with the following resolution graph:

\begin{center}
\begin{tikzpicture}
 
   \draw[thin ](-1,0)--(4,0);
      \draw[thin ](1,0)--(1,-3);
       
                \draw[thin ](0,1)--(0,0);
           \draw[fill=black   ] (0,1)circle(2pt);

           \draw[fill=black   ] (-1,0)circle(2pt);
           \draw[fill=white  ] (0,0)circle(2pt);
          \draw[fill=white ] (1,0)circle(2pt); 
           \draw[fill=black] (2,0)circle(2pt);
  \draw[fill=white ] (3,0)circle(2pt);
        \draw[fill =black ] (4,0)circle(2pt);
        
          \draw[fill=white ] (1,-1)circle(2pt);
         \draw[fill=white ] (1,-2)circle(2pt);
        \draw[fill =black ] (1,-3)circle(2pt);  
 

\node(a)at(-1,0.3){   $-4$};
\node(a)at(-0.3,0.3){   $-3$};
\node(a)at(-0.4,1){   $-2$};
\node(a)at(0.2,1){   $1$};
\node(a)at(1,0.3){   $-3$};
\node(a)at(2,0.3){   $-3$};
\node(a)at(3,0.3){   $-2$};
\node(a)at(4,0.3){   $-2$};
\node(a)at(0.6,-1){   $-2$};
\node(a)at(0.6,-2){   $-2$};
\node(a)at(0.6,-3){   $-2$};
 
\node(a)at(-1,-0.3){   $1$};
\node(a)at(0,-0.3){   $2$};
\node(a)at(1.2,-0.3){   $2$};
\node(a)at(2,-0.3){   $1$};
\node(a)at(3,-0.3){   $2$};
\node(a)at(4,-0.3){   $1$};
\node(a)at(1.25,-1){   $3$};
\node(a)at(1.25,-2){   $2$};
\node(a)at(1.25,-3){   $1$};
 

  \end{tikzpicture} 
  \end{center}
 
 The negative  weights are the self-intersections of the exceptional
 curves and the positive weights are the values of $s$. The $\cal L$-nodes are the black vertices.

The  graph on the picture below is the resolution graph $G_0$ determined by Theorem \ref{genericpolar}. The graph is decorated with arrows representing  the components of the strict transform of the polar curve $\Pi$ of a generic plane projection. The weights at the vertices which are in $G$  are the values of the function $s$. 
The gray node represents an exceptional curve obtained by
blowing up  the intersection point of two exceptional curves corresponding to a central edge. The polar curve $\Pi$  consists of  five pairs $\Pi_1,\ldots,\Pi_5$ of smooth components and one component $\Pi_6$
with multiplicity $2$.   The curves $\Pi_1$ and  $\Pi_2$ are $A_1$-curves, $\Pi_3$ and $\Pi_5$ are $A_3$-curves,  $\Pi_4$  is an $A_5$-curve and
$\Pi_6$ is an $A_4$-curve.  By Theorem \ref{genericpolar}, the contact between the two branches of $\Pi$ is  the minimal value of $s$ along
the shortest path between them, e.g.\ the contact between $\Pi_4$ and
$\Pi_5$ is $2$ and the contact between $\Pi_3$ and $\Pi_5$ is $1$.

\begin{center}
\begin{tikzpicture}
 
   \draw[thin ](-2,0)--(4,0);
      \draw[thin ](1,0)--(1,-3);
      \draw[thin,>-stealth,->](-2,0)--+(-0.8,0.8);
       \draw[thin,>-stealth,->](-2,0)--+(-0.8,-0.8);       
        \draw[thin,>-stealth,->](-2,0)--+(-1,-0.5);
          \draw[thin,>-stealth,->](-2,0)--+(-1,0.5);
          
          \node(a)at(-3.2,0.8){   $\Pi_1$};
       \node(a)at(-3.2,-0.8){   $\Pi_2$};

             \draw[thin,>-stealth,->](0,0)--+(-0.2,-1.1);
               \node(a)at(-0.2,-1.4){   $\Pi_6$};
               
                 \draw[thin,>-stealth,->](-1,0)--+(-0.2,-1.1);
          \draw[thin,>-stealth,->](-1,0)--+(0.2,-1.1);
               \node(a)at(-1.6,-1.1){   $\Pi_5$};

              \draw[thin,>-stealth,->](1,-1)--+(1.1,-0.2);
          \draw[thin,>-stealth,->](1,-1)--+(1.1,0.2);
               \node(a)at(2.1,-1.5){   $\Pi_4$};
               
            \draw[thin,>-stealth,->](3,0)--+(-0.2,-1.1);
          \draw[thin,>-stealth,->](3,0)--+(0.2,-1.1);
               \node(a)at(3.6,-1.1){   $\Pi_3$};

      \draw[thin ](-1,1)--(-1,0);
           \draw[fill=black   ] (-1,1)circle(2pt);

               \draw[fill=black ] (-2,0)circle(2pt);
           \draw[fill=white   ] (-1,0)circle(2pt);
           \draw[fill=gray   ] (0,0)circle(2pt);
          \draw[fill=white ] (1,0)circle(2pt); 
           \draw[fill=black] (2,0)circle(2pt);
  \draw[fill=white ] (3,0)circle(2pt);
        \draw[fill =black ] (4,0)circle(2pt);
        
          \draw[fill=white ] (1,-1)circle(2pt);
         \draw[fill=white ] (1,-2)circle(2pt);
        \draw[fill =black ] (1,-3)circle(2pt);  
 

\node(a)at(-2,0.3){   $\it 1$};

\node(a)at(-1.2,0.3){   $\it 2$};

\node(a)at(-1.2,1){   $\it 1$};


\node(a)at(1,0.3){   $\it 2$};

\node(a)at(2,0.3){ $\it 1$};

\node(a)at(3,0.3){   $\it 2$};
\node(a)at(4,0.3){   $\it 1$};

\node(a)at(0.7,-3){ $\it 1$}; 
 \node(a)at(0.7,-2){ $\it 2$}; 

\node(a)at(0.7,-1){ $\it 3$}; 
           
  \end{tikzpicture} 
  \end{center}
}
\end{example}

Using the fact  that each  branch of $\Pi$ is isomorphic to a plane curve and that the restriction $\ell |_{\Pi} \colon \Pi \to \Delta$ is generic, Bondil deduces from Theorem \ref{genericpolar} the following description of the discriminant curve:

\begin{theorem}[{\cite{B1,B2}}]\label{prop:discriminant} 
\begin{enumerate}
\item \label{item:contact} The discriminant curve $\Delta$ of a generic projection $\ell$ of $(X,0)$ is a  union of $A_{n}$-curves  in one-to-one correspondence with the curves $\Pi_i$ of Theorem \ref{genericpolar}, and  {the contact between any two of them equals}    that of the corresponding  $C_i$'s;
\item  the minimal resolution of $\Delta$ is the resolution ${\rho'_{\ell}} \colon Y_{\ell} \to \C^2$   which resolves the base points of the family of projected generic polar curves $(\ell(\Pi_{\cal D}))_{\cal D \in \Omega}$.
\item \label{iii} The minimal resolution $\pi \colon \widetilde{X} \to X$ of $(X,0)$  is a composition of blow-ups of points $\pi = \pi_1 \circ \ldots \circ \pi_n$ and it resolves the polar curve $\Pi$ of any generic projection $\ell  \colon (X,0) \to (\C^2,0)$. Moreover, the resolution of the discriminant curve $\Delta = \ell(\Pi)$ is a union of  blow-ups  $\rho =\rho_1 \circ \ldots \circ \rho_n$ starting with the blow-up $\rho_1$ of the origin of $\C^2$ such that we have a commutative diagram consisting of successive fiber products:

$$\xymatrix{\widetilde{X} =X_n \ar@{->}[r]^{\pi_n} \ar@{->}[d]^{\ell_n} & X_{n-1} \ar@{->}[r]^{\pi_{n-1}}  \ar@{->}[d]^{\ell_{n-1}} & \cdots  \ar@{->}[r]^{\pi_2} & X_1 \ar@{->}[r]^{\pi_{1}}  \ar@{->}[d]^{\ell_1} &X  \ar@{->}[d]^{\ell}   \\
Y_n  \ar@{->}[r]^{\rho_n}  & Y_{n-1} \ar@{->}[r]^{\rho_{n-1}}    & \cdots   \ar@{->}[r]^{\rho_2} & Y_1 \ar@{->}[r]^{\rho_{1}}   &\C^2
} 
$$
\end{enumerate}
\end{theorem}

{
Consider  the morphism $\sigma' \colon Y' \to Y_n$ such that $\rho \circ \sigma'$ is the minimal good resolution of $\Delta$.   By (i) of Theorem \ref{prop:discriminant}, $\sigma'$ consists of blowing up  each intersection point $q=\Delta^*_0 \cap \rho^{-1}(0)$, where $\Delta_0$ is a component of $\Delta$ which is a  $A_n$-curve with $n$  even and then the intersection point $q'=\Delta_0^* \cap E_q$ where $E_q$ is the exceptional $\Bbb P^1$-curve created by blowing up $q$.
By (ii) of  Theorem \ref{prop:discriminant}, we have  $Y'=Y_{\ell}$ and $\rho'_{\ell} = \rho \circ \sigma'$. Moreover,  the  dual  graph $T_0$ of the resolution $\rho'_{\ell} $ is determined  from the resolution graph $G$ of $\pi$ using (i) of Theorem \ref{prop:discriminant}.

 Let $T'_0$ be the subgraph of $T_0$  consisting of the union of paths joining the root vertex to $\Delta$-curves, so  $T_0 \setminus T'_0$ consists of  isolated vertices corresponding to the curves $E_q$. Let $\alpha \colon X_0 \to \widetilde{X}$ be the morphism defined by  $\pi_0  = {\pi \circ \alpha}$. 
 Consider the morphism $\ell_n \colon \widetilde{X} \to Y_n$ introduced in (iii) of Theorem \ref{prop:discriminant}. The restriction $\ell_n \mid_{\pi^{-1}(0)} \colon \pi^{-1}(0) \to \rho^{-1}(0)$ lifts to a unique morphism $\ell' \colon \pi_0^{-1}(0) \to \bigcup_{u \in V(T'_0)} C_u$  such that $\ell_n \circ \alpha = \sigma' \circ \ell'$.  The image by $\ell'$ of each component of $\pi_0^{-1}(0)$ is a curve, so we have an  induced graph-map $L \colon G_0 \to T'_0$, i.e.,  $L(V(G_0)) =V(T'_0)$ and the image by $L$ of an edge $(v,v')$
   of $G_0$ is the edge $(L(v),L(v'))$. 
 
 Let us extend the function $s \colon G \to \N^*$ to a function $s \colon G_0 \to \frac{1}{2}  \N^*$ by setting $s(v)=s(v_1)+1/2$ for each vertex $(v)$ obtained by blowing up a central edge $(v_1)-(v_2)$ of $G$. 
 
 Let us define a function $\widehat{s}   \colon T'_0 \to \frac{1}{2}  \N^*$ as follows. If $(v)$ is a vertex representing a curve of $\rho^{-1}(0)$, $\widehat{s}(v)$ is the number of vertices on the shortest path from $(v)$ to the root vertex. {Otherwise}, $\widehat{s}(v) = s(w)+1/2$ where  $(w)$ is the vertex of $T'_0$ adjacent to $(v)$.

\begin{corollary} \label{cor:graph map}   
 \begin{enumerate}
 \item \label{cor:(i)} For each vertex $(v)$ of $G_0$,  $s(v) = \widehat{s}(L(v))$;  
\item \label{cor:(ii)}  For $n \in \frac{1}{2}  \N^*$, denote by $T'_0(\widehat{s} > n)$ the maximal subtree of $T'_0$ such that all vertices of $T'_0(\widehat{s}  > n)$ have $\widehat{s} > n$ and let $G_0(s > n)$ be the maximal subgraph of $G_0$ such that all vertices of $G_0(s  > n)$ have {$s > n$}. Then for every connected component  $\tau$ of $T'_0(\widehat{s} > n)$, $L^{-1}(\tau)$ is a connected component of  $G_0(s > n)$. 
\end{enumerate}
\end{corollary}

\begin{proof}
{ This is a direct consequence of Theorems \ref{genericpolar} and   \ref{prop:discriminant}. In particular, Point \ref{cor:(ii)} is a consequence of \ref{item:contact0} of Theorem \ref{genericpolar} and of  \ref{item:contact} of Theorem \ref{prop:discriminant}.}
 \end{proof}

  \begin{example} We continue with the minimal singularity introduced in Example \ref{example0}. 
  The right tree in the picture below is the  tree $T_0$. It is obtained by using Theorem \ref{prop:discriminant}. The arrows represent the components of
  $\Delta = \bigcup_{i=1}^6 \Delta_i$ where  $\Delta_i=\ell(\Pi_i)$,
  $i=1,\ldots,6$.  Each vertex of $T'_0$ is weighted by the value of $\widehat{s}$.  The graph on the left is the graph $G_0$ determined in Example   \ref{example0}. Each vertex of $G_0$ is weighted by the value of the extended function $s\colon G_0 \to \frac{1}{2}  \N^*$, and the graph-map $L$ from $G_0$ to $T'_0$ described in Corollary \ref{cor:graph map} sends vertices and edges horizontally. 
  

  \begin{center}
\begin{tikzpicture}
 \begin{scope}[xshift=-1.5cm]
   \draw[thin ](2,0)--(3,-2);
     \draw[thin ](2,0)--(-1,-2);
       \draw[thin ](2,-1)--(1,-2);
       \draw[thin ](2,-1)--(3,-2);
          \draw[thin ](3,-2)--(3,-4);
           \draw[thin ](4,-3)--(3,-4);
             \draw[thin ](4,-3)--(5,-4);
               \draw[thin ](1,-2)--(1,-4);
                 \draw[thin ](-1,-2)--(-0.4,-4);
                 \draw[thin ](-1,-2)--(-1.6,-4);
                 
\draw[thick, >-stealth,-> ](4,0)--(6,0);
 \node(a)at(5,0.3){   $L$};
                 
  \draw[thin,>-stealth,->](-1.6,-4)--+(+0.8,-0.8);
       \draw[thin,>-stealth,->](-1.6,-4)--+(-0.8,-0.8);
        \draw[thin,>-stealth,->](-1.6,-4)--+(-0.6,-1);
          \draw[thin,>-stealth,->](-1.6,-4)--+(+0.6,-1);
            \node(a)at(-2.5,-5.1){   $\Pi_1$};
       \node(a)at(-0.6,-5.1){   $\Pi_2$};
       
         \draw[thin,>-stealth,->](2,-1)--+(+0.2,-1);
       \draw[thin,>-stealth,->](2,-1)--+(-0.2,-1);
        \node(a)at(2,-2.3){   $\Pi_4$};
        
          \draw[thin,>-stealth,->](4,-3)--+(+0.2,1);
       \draw[thin,>-stealth,->](4,-3)--+(-0.2,1);
        \node(a)at(4,-1.7){   $\Pi_3$};
        
         \draw[thin,>-stealth,->](2,0)--+(-0.2,1.1);
 \node(a)at(2.2,1){   $\Pi_6$};
 
  \draw[thin,>-stealth,->](-1,-2)--+(-1,0.2);
             \draw[thin,>-stealth,->](-1,-2)--+(-1,-0.2);
               \node(a)at(-2.3,-2){   $\Pi_5$};

        \draw[fill=white] (2,0)circle(2pt);
      \draw[fill=white] (-1,-2)circle(2pt);
        \draw[fill=white] (1,-2)circle(2pt);
          \draw[fill=white] (3,-2)circle(2pt);
            \draw[fill=white] (4,-3)circle(2pt);
             \draw[fill=white] (3,-4)circle(2pt);
              \draw[fill=white] (5,-4)circle(2pt);
                   \draw[fill=white] (2,-1)circle(2pt);
                    \draw[fill=white] (1,-4)circle(2pt);
                      \draw[fill=white] (-0.4,-4)circle(2pt);
                       \draw[fill=white] (-1.6,-4)circle(2pt);

\node(a)at(2.5,0){ $\it 5/2$};
\node(a)at(2,-0.7){ $\it 3$};
\node(a)at(2.7,-2){ $\it 2$};
\node(a)at(0.7,-2){ $\it 2$};
\node(a)at(-0.7,-2){ $\it 2$};
\node(a)at(4.3,-3){ $\it 2$};

\node(a)at(5.2,-4){ $\it 1$};
\node(a)at(2.8,-4){ $\it 1$};
\node(a)at(1.2,-4){ $\it 1$};
\node(a)at(-0.2,-4){ $\it 1$};
\node(a)at(-1.3,-4){ $\it 1$};

\end{scope}


  \draw[thin ](7,0)--(7,-4);
   \draw[thin ](7,0)--(8.5,0);
    \draw[thin ](7,-2)--(6,-1);
     \draw[thin ](7,-4)--(8,-3);

      \draw[thin,>-stealth,->](7,-4)--+(+0.8,-0.8);
       \draw[thin,>-stealth,->](7,-4)--+(-0.8,-0.8);
        \draw[thin,>-stealth,->](7,-4)--+(-0.6,-1);
          \draw[thin,>-stealth,->](7,-4)--+(+0.6,-1);
            \node(a)at(6.1,-5.1){   $\Delta_1$};
       \node(a)at(7.9,-5.1){   $\Delta_2$};

             \draw[thin,>-stealth,->](7,-2)--+(+1,0.2);
             \draw[thin,>-stealth,->](7,-2)--+(+1,-0.2);
               \node(a)at(8.3,-2){   $\Delta_5$};
               
         \draw[thin,>-stealth,->](8,-3)--+(+1,0.2);
                  \draw[thin,>-stealth,->](8,-3)--+(+1,-0.2);
                     \node(a)at(9.3,-3){   $\Delta_3$};
                  
             \draw[thin,>-stealth,->](6,-1)--+(-1,+0.2);
          \draw[thin,>-stealth,->](6,-1)--+(-1,-0.2);
           \node(a)at(4.7,-1){   $\Delta_4$};

 \draw[thin,>-stealth,->](7,0)--+(-0.2,1.1);
 \node(a)at(7.2,1){   $\Delta_6$};
                   
                   \draw[fill=white ] (7,-4)circle(2pt);
           \draw[fill=white] (7,0)circle(2pt);
               \draw[fill=white   ] (7,-2)circle(2pt);
                  \draw[fill=white   ] (6,-1)circle(2pt);
                        \draw[fill=white   ] (8.5,0)circle(2pt);
                         \draw[fill=white   ] (8,-3)circle(2pt);

\node(a)at(6,-0.7){ $\it 3$};
\node(a)at(6.6,0){ $\it 5/2$};
 
\node(a)at(6.7,-2.1){ $\it 2$};
\node(a)at(6.7,-3.8){ $\it 1$};
\node(a)at(7.7,-2.8){ $\it 2$};
  \end{tikzpicture} 
  \end{center}
   
 \end{example}   
 
}

 \section{Inner rates on a minimal surface} \label{sec:inner rates}

Let $\eta \colon Y \to \C^2$  be a sequence of blow-ups of points starting with the blow-up of the origin of $\C^2$ and let $C$ be an irreducible component of $\eta ^{-1}(0)$.  Let $(\gamma_1,0)$ and $(\gamma_2,0)$ be  two irreducible curve germs whose strict transforms by $\eta $ meet $C$ at two distinct points which are smooth points of $\eta ^{-1}(0)$. Then the contact $q_{inn}(\gamma_1, \gamma_2)= q_{out}(\gamma_1, \gamma_2) $  of $\gamma_1$ and $\gamma_2$ in $\C^2$ does not depend on the choice of  $\gamma_1$ and $\gamma_2$.  

\begin{definition}\label{def:inner rate} We call $ q_{inn}(\gamma_1,
  \gamma_2)$ the {\it  inner rate} of $C$  and we denote it by
  $q_C$. 
  
  If $\gamma$ is a test curve at a vertex $(u)$ of  $\rho_\ell$ for some generic projection $\ell \colon (X,0) \to (\C^2,0)$ of a normal surface $(X,0)$,  we will say that $q_{C_u}$ is the {\it inner rate of $\gamma$} and denote $q_{\gamma}=q_{C_u} = q_u$. 
\end{definition}

\begin{example}\label{example:inner rates} If $(X,0)$ is a minimal singularity, then for each vertex $(u)$ of $T'_0$, we have $q_u = \widehat{s}(u)$ where $\widehat{s} \colon T'_0 \to \frac{1}{2} \N^*$ is defined before the statement of Corollary \ref{cor:graph map}.
\end{example}
 
\begin{lemma}[{\cite[Lemma 15.1]{NPP}}] \label{rk:inner rate} Let $\pi \colon X' \to X$ be a resolution of $X$ and let  $E$ be an irreducible component of the exceptional divisor $\pi^{-1}(0)$. Let $\gamma$ and $\gamma'$ be two complex curve germs in $(X,0)$ whose strict transforms by $\pi$ are curvettes of $E$ meeting $E$ at two distinct points.  Then  $q_{inn}(\gamma, \gamma')$  is independent of the choice of $\gamma$ and $\gamma'$.  Moreover, if $\ell \colon (X,0)\to (\C^2,0)$ is a generic projection which is also generic for the curve $\gamma \cup \gamma'$ and if $\eta$ is a resolution of the curve $\ell(\gamma) \cup \ell(\gamma')$, then there is a component  $C$ of $\eta^{-1}(0)$ such that  $\ell(\gamma)^*$ and $\ell(\gamma')^*$  are curvettes of $C$, and we have  $q_C = q_{inn}(\gamma, \gamma')$. 
\end{lemma}
  
\begin{definition}  \label{def:inner rate2} We set $q_E = q_{inn}(\gamma, \gamma')$ and we call $q_E$ the inner rate of $E$. 
\end{definition}

\begin{proposition}[{\cite[Proposition 15.3]{NPP}}] \label{lem:inner contact computation} Let $\gamma$ and $\gamma'$ be two complex curves on $(X,0)$.  Consider a  resolution  $\pi \colon X' \to X$    which factors through the Nash modification and through the blow-up of the maximal ideal  and which is a resolution of the complex curve $\gamma \cup \gamma'$ and set $\pi^{-1}(0) = \bigcup_{v}E_v$.  Let $\widetilde{G}$ be the resolution graph of $\pi$ whose vertices $(v)$ are weighted by the inner rates introduced in Definition \ref{def:inner rate2}. Let  $(v)$ and $(v')$ be the vertices of $G$ such that $\gamma^* \cap E_{v} \neq \emptyset$ and    ${\gamma'}^* \cap E_{v'} \neq \emptyset$.  Then $q_{inn}(\gamma, \gamma')= q_{v,v'}$ where   $q_{v,v'}$ is the   maximum among minimum of inner rates  along paths from $(v)$ to $(v')$ in the graph $\widetilde{G}$. 
\end{proposition}

 In \cite{NPP}, we introduced a resolution  $\mu_0 \colon W_0 \to X$,  the so-called LNE$_{test}$-resolution, which is a good resolution for every principal component over every  test curve for every generic projection $\ell$, i.e., the strict transform of each such principal component is a curvette of an irreducible component of $\mu_0^{-1}(0)$. We will now explicitly describe the graph of the LNE$_{test}$-resolution of a minimal resolution and the inner rate  attached to each vertex. This will be a key tool in the proof of the ``if" direction of Theorem  \ref{theorem:main}.

Let us first recall the definition of $\mu_0$. Consider a  generic projection $\ell \colon (X,0) \to (\C^2,0)$. Let  $X_{\ell}$   be the pull-back  of 
  $\ell$ and  $\rho_{\ell} \colon Z_{\ell} \to \C^2$  (cf.\ Definition \ref{def:test curve}) and let   $\alpha_{\ell} \colon X'_{\ell}  \to X_{\ell}$ be the minimal  good resolution of $X_{\ell}$. This induces a resolution $\pi_{\ell} \colon X'_{\ell} \to X$   which  factors through $X_0$ and  a projection  $\widetilde{\ell} \colon X'_{\ell} \to Z_{\ell}$. Let $\xi_{\ell} \colon X'_{\ell}  \to W_{\ell}$  be the morphism obtained by blowing down iteratively the exceptional $(-1)$-curves which  are not on simple paths joining vertices of $G_0'$   (Definition \ref{def:G'}). We then obtain a good resolution  $\mu_{\ell} \colon W_{\ell} \to X$ of $(X,0)$ which factors through  $\pi_0 \colon X_0 \to X$ by a morphism $\beta_{\ell} \colon W_{\ell} \to X_0$. By \cite[Lemma 13.1]{NPP}, the morphism $\beta_{\ell}$ does not depend on $\ell$.  We set $\beta_0 = \beta_{\ell}$, $W_0 = W_{\ell}$ and $\mu_0 = \mu_{\ell}$.

\begin{definition} \label{def:LNE test resolution} We call $\mu_0 \colon W_0 \to X$ the LNE$_{test}$-resolution of $(X,0)$. We denote by $\Gamma_0$ the graph of $\mu_0$ and by $\Gamma'_0$ the subgraph of $\Gamma_0$ which consists of the union of all simple paths joining $\cal L$- or $\cal P$-nodes. 
\end{definition}

In the case of a minimal  singularity, we have $G_0 = G'_0$ (Remark \ref{rk:G_0=G'_0}) and then $\Gamma_0 = \Gamma'_0$. The following proposition describes explicitly the graph $\Gamma_0$ with inner rates  from the minimal resolution graph  $G$ for any minimal singularity.

\begin{proposition}\label{innergeomertythm} Let $(X,0)$ be a minimal surface singularity. 
\begin{enumerate}
\item[(1)] The inner rates of the vertices of $G_0$ are determined by $G$ as follows: 
\begin{enumerate}
\item If $(v)$ is a vertex of $G$, then $q_v = s(v)$;
\item If $(v)$ is a vertex obtained by blowing up a double point corresponding to a central edge $(v_1)-(v_2)$ of $G$, then $q_v = s(v_i) +1/2$.
\end{enumerate}
\item[(2)] The graph $\Gamma_0$ is obtained by performing the following blow-ups for each edge $(u_1)-(u_2)$ joining two $\Delta$-nodes of $T_0$.  After exchanging $u_1$ and  $u_2$ if necessary  we can assume  $\widehat{s}(u_1) < \widehat{s}(u_2)$. So $\widehat{s}(u_1) \in \N^*$. {Let $\widehat{\tau}$ be the connected component of $T'_0(\widehat{s}>\widehat{s}(u_1))$ containing $(u_2)$ and let $\tau$ be the connected component of $G_0(s > \widehat{s}(u_1))$ such that $L(\tau)=\widehat{\tau}$ (see Corollary \ref{cor:graph map}).  }
\begin{enumerate}
\item Either {$\widehat{s}(u_2) \in \N^*$, i.e., $\widehat{s}(u_2)=\widehat{s}(u_1)+1$}.  Then  we blow up all double points corresponding to edges  $(w_1)-(w_2)$ in $\tau$ such that  $s(w_1)={\widehat{s}(u_1)}$ and $s(w_2)={\widehat{s}(u_2)}$; each created vertex $(w)$ has inner rate $q_{w} = s(v_1)+1/2$.

 \item Otherwise,  {$\widehat{s}({u_2}) \in \N^*+1/2$, i.e., $\widehat{s}({u_2})  = \widehat{s}(u_1)+1/2$.}  Then, we first blow up the double points corresponding to  edges $(w_1)-(w'_2)$  in $G$ such that ${(w'_2) } \in V(\tau)$,  $s(w_1)={\widehat{s}(u_1)}$ and $s(w'_2)={\widehat{s}(u_1)+1}$;  each created vertex $(w_2)$ has inner rate $q_{w_2} = {\widehat{s}(u_1)}+1/2$. Then we blow up the double point corresponding to each created edge $(w_1)-(w_2)$. Each created vertex $(w)$ has inner rate $q_{w} = {\widehat{s}(u_1)}+1/3$.
\end{enumerate}
\end{enumerate}
\end{proposition}

\begin{remark} \label{rk:extend L} Let $T$ be the dual graph of $\rho_{\ell}$ and let $T'$ be the subgraph of $T$ which consists of the union of all simple paths joining $\Delta$-nodes to the root vertex in $T$ (see Definition \ref{def:test curve}).  So with the notations introduced before  Corollary \ref{cor:graph map}, we have $V(T_0) \subset V(T)$ and $V(T'_0) \subset V(T')$.   By construction, the graph-map $L \colon G_0 \to T'_0$ extends to a graph-map $L \colon \Gamma_0 \to T'$ such that for every vertex $(v)$ of $\Gamma_0$, we have $q_v = q_{L(v)}$.   \end{remark}

\begin{example} \label{example} We consider again the minimal singularity of Example \ref{example0}. The two graphs below are the graph $\Gamma_0$ and the dual graph  $T$ of $\rho_{\ell}^{-1}(0)$ with  vertices weighted by the corresponding inner rates. The black vertices are the vertices of $T$ which are not in $T_0$ (resp. the vertices of $\Gamma_0$ which are not in $G_0$). The images of vertices and edges of $\Gamma_0$ are sent horizontally on that of $T'$ by the extended graph-map $L \colon \Gamma_0 \to T'$ introduced in Remark \ref{rk:extend L}.

 \begin{center}
\begin{tikzpicture}
 \begin{scope}[xshift=-1.5cm]
   \draw[thin ](2,0)--(3,-2);
     \draw[thin ](2,0)--(-1,-2);
       \draw[thin ](1.75,-1)--(0.5,-2);
       \draw[thin ](1.75,-1)--(3,-2);
          \draw[thin ](3,-2)--(3,-4);
           \draw[thin ](4.5,-3)--(3,-4);
             \draw[thin ](4.5,-3)--(6,-4);
               \draw[thin ](0.5,-2)--(0.5,-4);
                 \draw[thin ](-1,-2)--(-0.4,-4);
                 \draw[thin ](-1,-2)--(-1.6,-4);
                 
\draw[thick, >-stealth,-> ](4,0)--(6,0);
 \node(a)at(5,0.3){   $L$};
                 
  \draw[thin,>-stealth,->](-1.6,-4)--+(+0.8,-0.8);
       \draw[thin,>-stealth,->](-1.6,-4)--+(-0.8,-0.8);
        \draw[thin,>-stealth,->](-1.6,-4)--+(-0.6,-1);
          \draw[thin,>-stealth,->](-1.6,-4)--+(+0.6,-1);
            \node(a)at(-2.5,-5.1){   $\Pi_1$};
       \node(a)at(-0.6,-5.1){   $\Pi_2$};
       
         \draw[thin,>-stealth,->](1.75,-1)--+(+0.2,-1);
       \draw[thin,>-stealth,->](1.75,-1)--+(-0.2,-1);
        \node(a)at(1.75,-2.3){   $\Pi_4$};
        
          \draw[thin,>-stealth,->](4.5,-3)--+(+0.2,1);
       \draw[thin,>-stealth,->](4.5,-3)--+(-0.2,1);
        \node(a)at(4.5,-1.7){   $\Pi_3$};
        
         \draw[thin,>-stealth,->](2,0)--+(-0.2,1.1);
 \node(a)at(2.2,1){   $\Pi_6$};
 
  \draw[thin,>-stealth,->](-1,-2)--+(-1,0.2);
             \draw[thin,>-stealth,->](-1,-2)--+(-1,-0.2);
               \node(a)at(-2.3,-2){   $\Pi_5$};

        \draw[fill=white] (2,0)circle(2pt);
      \draw[fill=white] (-1,-2)circle(2pt);
        \draw[fill=white] (0.5,-2)circle(2pt);
          \draw[fill=white] (3,-2)circle(2pt);
            \draw[fill=white] (4.5,-3)circle(2pt);
             \draw[fill=white] (3,-4)circle(2pt);
              \draw[fill=white] (6,-4)circle(2pt);
                   \draw[fill=white] (1.75,-1)circle(2pt);
                    \draw[fill=white] (0.5,-4)circle(2pt);
                      \draw[fill=white] (-0.4,-4)circle(2pt);
                       \draw[fill=white] (-1.6,-4)circle(2pt);
\draw[fill=black] (5.25,-3.5)circle(2pt);
\draw[fill=black] (3.75,-3.5)circle(2pt);
\draw[fill=black] (3,-3)circle(2pt);
\draw[fill=black] (0.5,-3)circle(2pt);
\draw[fill=black] (2.35,-1.5)circle(2pt);
\draw[fill=black] (1.15,-1.5)circle(2pt);
\draw[fill=black] (-0.7,-3)circle(2pt);
\draw[fill=black] (-1.3,-3)circle(2pt);

\draw[fill=black] (2.5,-1)circle(2pt);
\draw[fill=black] (0.5,-1)circle(2pt);

\node(a)at(2.5,0){ $\it 5/2$};
\node(a)at(1.75,-0.7){ $\it 3$};
\node(a)at(3.3,-2){ $\it 2$};
\node(a)at(0.3,-2){ $\it 2$};
\node(a)at(-0.7,-2){ $\it 2$};
\node(a)at(4.5,-3.3){ $\it 2$};

\node(a)at(6.2,-4){ $\it 1$};
\node(a)at(2.8,-4){ $\it 1$};
\node(a)at(0.7,-4){ $\it 1$};
\node(a)at(-0.2,-4){ $\it 1$};
\node(a)at(-1.3,-4){ $\it 1$};

\node(a)at(5.1,-3.8){ $\it 3/2$};
\node(a)at(3.9,-3.8){ $\it 3/2$};
\node(a)at(0.9,-3){ $\it 3/2$};
\node(a)at(2.6,-3){ $\it 3/2$};

\node(a)at(-0.3,-3){ $\it 3/2$};
\node(a)at(-1.7,-3){ $\it 3/2$};

\node(a)at(2.3,-1.8){ \small{$\it 5/2$}};
\node(a)at(1.2,-1.8){ \small{$\it 5/2$}};

\node(a)at(2.9,-1){ \small{$\it 7/3$}};
\node(a)at(0,-1){ \small{$\it 7/3$}};

\end{scope}


  \draw[thin ](7,0)--(7,-4);
   \draw[thin ](7,0)--(8.5,0);
    \draw[thin ](7,-2)--(5,-1);
     \draw[thin ](7,-4)--(9,-3);

      \draw[thin,>-stealth,->](7,-4)--+(+0.8,-0.8);
       \draw[thin,>-stealth,->](7,-4)--+(-0.8,-0.8);
        \draw[thin,>-stealth,->](7,-4)--+(-0.6,-1);
          \draw[thin,>-stealth,->](7,-4)--+(+0.6,-1);
            \node(a)at(6.1,-5.1){   $\Delta_1$};
       \node(a)at(7.9,-5.1){   $\Delta_2$};

             \draw[thin,>-stealth,->](7,-2)--+(+1,0.2);
             \draw[thin,>-stealth,->](7,-2)--+(+1,-0.2);
               \node(a)at(8.3,-2){   $\Delta_5$};
               
         \draw[thin,>-stealth,->](9,-3)--+(+1,0.2);
                  \draw[thin,>-stealth,->](9,-3)--+(+1,-0.2);
                     \node(a)at(10.3,-3){   $\Delta_3$};
                  
             \draw[thin,>-stealth,->](5,-1)--+(-1,+0.2);
          \draw[thin,>-stealth,->](5,-1)--+(-1,-0.2);
           \node(a)at(3.7,-1){   $\Delta_4$};

 \draw[thin,>-stealth,->](7,0)--+(-0.2,1.1);
 \node(a)at(7.2,1){   $\Delta_6$};
                   
                   \draw[fill=white ] (7,-4)circle(2pt);
           \draw[fill=white] (7,0)circle(2pt);
               \draw[fill=white   ] (7,-2)circle(2pt);
                  \draw[fill=white   ] (5,-1)circle(2pt);
                        \draw[fill=white   ] (8.5,0)circle(2pt);
                         \draw[fill=white   ] (9,-3)circle(2pt);
                         
\draw[fill=black] (7,-3)circle(2pt);
\draw[fill=black] (7,-1)circle(2pt);
\draw[fill=black] (8,-3.5)circle(2pt);
\draw[fill=black] (6,-1.5)circle(2pt);

\node(a)at(5,-0.7){ $\it 3$};
\node(a)at(6.6,0){ $\it 5/2$};
 
\node(a)at(6.7,-2.1){ $\it 2$};
\node(a)at(6.7,-3.8){ $\it 1$};
\node(a)at(8.7,-2.8){ $\it 2$};
\node(a)at(8,-3.8){ $\it 3/2$};
\node(a)at(6.6,-3){ $\it 3/2$};
\node(a)at(6.6,-1){ $\it 7/3$};
\node(a)at(5.6,-1.7){ $\it 5/2$};
  \end{tikzpicture} 
  \end{center}
\end{example}

\begin{proof} By construction, $\mu_0$ is the minimal resolution of $(X,0)$ such that for every generic projection $\ell \colon (X,0) \to (\C^2,0)$, for every vertex $(v)$ of $T'$ and every test curve $\gamma$ at $(v)$, a component   $\widehat{\gamma}$ of $\ell^{-1}(\gamma)$ is principal if and only if its strict transform by $\mu_0$ is a curvette of a component $E_v$ of $\mu_0^{-1}(0)$ such that $(v) \in V(\Gamma'_0)$. Then, in order to describe $\Gamma_0$, it suffices to start from the resolution $\pi_0 \colon X_0 \to X$ and to blow up points iteratively  until resolving the principal components over test curves at vertices of $T'$.  

 Let us first describe $T$ and the inner rates of vertices of the subtree $T'$. By  Theorem  \ref{prop:discriminant}, $T_0$ is the minimal resolution tree of $\Delta$, and $\Delta$ consists of $A_n$-curves having integral contacts between each other. Therefore, a vertex $(u)$  of $T'_0$ has inner rate in  $\N+1/2$. Now, $T$ is obtained from $T_0$ by blowing up every edge between two adjacent $\Delta$-nodes in $T'_0$. Either the two nodes have integral inner rates  $n$ and $n+1$, then the inner rate of the created vertex is $n+1/2$, or one of the inner rates is $n$ and the other $n+1/2$, in which case the inner rate of the created vertex equals $n+1/3$. Summarizing, a vertex $(u)$ of $T'$ has one of the following  types (1a), (1b), (2a) or (2b):

\noindent
{\bf Case (1)} $u \in V(T'_0)$ and we have two cases for the inner rate $q_u$: 
\begin{description}
\item[(1a)]   $q_u=n$ with $n \in \N^*$;
\item[({1b})]  $q_u=n+1/2$ with $n \in \N^*$.   
\end{description}

\noindent
{\bf Case (2)}  $u \not\in V(T'_0)$, i.e.,  $(u)$ is obtained by blowing up the edge between two $\Delta$-nodes  $(u_1)$ and $(u_2)$ of $T_0$, and maybe after exchanging $u_1$ and $u_2$, we  are in one of the following two cases: 
\begin{description}
\item[(2a)]   $q_{u_1} = n$ and $q_{u_1} = n+1$ with $n \in \N^*$, in which case $q_u = n+1/2$;
 \item[(2b)]   $q_{u_1} = n$ and $q_{u_1} = n+1/2$ with $n \in \N^*$, in which case $q_u = n+1/3$.
\end{description}

We will show that the four cases in the statement of Proposition \ref{innergeomertythm} correspond to the principal components over test curves in the four cases  just described. 

 Let $(u)$ be a vertex of type $(1a)$ and let $\gamma$ be a test curve at $(u)$. Set $\widehat{s}(u)=n$. We have $q_u=\widehat{s}(u)=n$ (see Example \ref{example:inner rates}).  By  \ref{cor:(i)}  of Corollary \ref{cor:graph map},  every principal component  of $\ell^{-1}(\gamma)$ is the $\rho_{\ell}$-image of a curvette of a component $E_v$ of $G$ such that $s(v)=n$, and  since $q_u=q_v$, we then have  $q_v=s(v)$.

Assume now that  $u$ is of type $(1b)$, so we have $q_u=n+1/2$ with $n \in \N^*$.  By  \ref{cor:(ii)} of Corollary \ref{cor:graph map},  $L^{-1}(u)$ contains a unique vertex $(v)$ which is obtained by blowing up a central edge $(v_1)-(v_2)$ of $G$.  Therefore the principal components of $\ell^{-1}(\gamma)$  are the $\rho_{\ell}$-images of  curvettes  of  $E_v$ (in fact, there are exactly two principal components in $\ell^{-1}(\gamma)$). We have  $q_v=q_u=n+1/2$.  On the other hand,  we have $s(v_1)=s(v_2)=n$. Therefore, we obtain $q_v = s(v_1)+1/2$.

Assume now that $(u)$ is of type $(2a)$ so $C_u$ is obtained by blowing up the intersection point $q=C_{u_1} \cap C_{u_2}$  between two $\Delta$-curves $C_{u_i}$  of $\rho^{-1}(0)$ such that  $q_{u_1} =n$ and $q_{u_2} = n+1$ for some integer $n \in \N^*$. Let $\gamma$ be a test curve at $(u)$. Then the principal components of $\ell^{-1}(\gamma)$ have their strict transforms by $\pi$ passing through the points $p$ such that $\ell_n(p)=q$. By Corollary \ref{cor:graph map}, each such $p$ is an intersection point $p=E_{w_1}\cap E_{w_2}$ where $(w_1)$ and $(w_2)$ are two vertices of $G$ such that  $s(w_1)=n$ and $s(w_2)=n+1$  and {$(w_2)$} is in the connected component 
{$\tau$} of $G_0({s}> \widehat{s}(u_1))$ {introduced in the statement of the proposition}.

We now have to prove that the inner rate $q_w$ of the vertex obtained by blowing up the edge $(w_1)-(w_2)$ equals $n+1/2$. 

Let us  first prove that the germ of {the} morphism $\ell_n
\colon (\widetilde{X},p) \to (Y_n,q)$ is an isomorphism. We  choose
coordinates  $(s,t)$ in $\widetilde{X}$ centered at $p$ and
coordinates  $(s',t')$ in $Y_n$ centered at $q$ such that
$\ell_n(s,t)=(s^{\alpha},t^{\beta})$. Assume $\ell = (x,y)$ where
$h=x$ is a generic linear form on $(X,0)$ {and $h$ is the
  composition  $h'\circ\ell$ where $h'\colon \C^2\to\C$ is {the
    projection to} the first coordinate}. Since $(X,0)$ is minimal, the multiplicity of $h$ along $E_{w_i}$ equals $1$, so $(h \circ \pi)(s,t) = s t \zeta(s,t)$ where $\zeta$ is a unit in $\C\{s,t\}$. Let $\rho =  \rho_1 \circ \cdots \circ \rho_n$ be as defined in Theorem \ref{prop:discriminant}. Since the multiplicities of $x$ along $C_{u_1}$ and $C_{u_2}$ also equal $1$, then $({h'} \circ \rho)(s',t') = s't' \delta(s',t')$  where  $\delta$ is a unit in $\C\{s',t'\}$. Since $\rho \circ \ell_n = \ell \circ \pi$, we then obtain 
$st \zeta(s,t) = s^{\alpha} t^{\beta} \delta(s^{\alpha}, t^{\beta})$. This implies $\alpha = \beta = 1$, i.e., $\ell_n \colon (\widetilde{X},p) \to (Y_n,q)$ is the germ of an  isomorphism. 

Let   $e_p \colon X_p \to  \widetilde{X}$ be the blow-up of $p$ and consider the  $\Bbb P^1$-curve  $E_w= e_p^{-1}(p)$.  Let $e_q \colon Y_q \to Y_n$ be the blow-up of $q$,   so we have  $C_u = e_q^{-1}(q)$.  We then have an induced isomorphism $\ell'_n \colon N(E_w) \to N(C_u)$  from a neighbourhood $N(E_w)$ of $E_v$ to a neighbourhood $N(C_u)$ of  $C_u$.  Therefore $q_w=q_u=  n+1/2$. 

Let us now treat Case (2b) so $C_u$ is obtained by blowing up the intersection point $q'=C_{u_1} \cap C_{u_2}$  between a $\Delta$-curve $C_{u_1}$ of $ \rho^{-1}(0)$ such that  $q_{u_1} =n$ and a $\Delta$-curve $C_{u_2}$ with $q_{u_2} =n+1/2$.   By Corollary \ref{cor:graph map},  the principal components of $\ell^{-1}(\gamma)$ have their strict transforms by $\pi_0$ passing through the points $p=E_{w_1}\cap E_{w_2}$ where {$(w_2)$} is in the connected component {$\tau$} of $G_0({s}> s(v_1))$,  $s(w_1)=n$ and  $s(w_2)=n+1/2$.  Set $q=C_{u_1} \cap C_u$. Then $\ell_n$ induces a morphism $\ell'_n \colon (X_0,p) \to (Y_{\ell},q)$.

Let us prove that $\ell'_n \colon (X_0,p) \to (Y_{\ell},q)$ is the germ of an isomorphism. We  choose coordinates  $(s,t)$ in $X_0$ centered at $p$ and  coordinates  $(s',t')$ in $Y_{\ell}$ centered at $q$ such that $\ell_n(s,t)=(s^{\alpha},t^{\beta})$. Using again the notations of Case (2a),  the multiplicity of $h$ along $E_{w_1}$ equals $1$. Since the strict transform of $h$ does not pass through $E_{w_1} \cap E_{w_2}$, the strict transform of $h$ along $E_{w_2}$ equals $2$. Then  $(h \circ \pi_0)(s,t) = s t^2 \zeta(s,t)$ where $\zeta$ is a unit in $\C\{s,t\}$.   Since the multiplicities of $x$ along $C_{u_1}$ and $C_{u_2}$  equal  respectively $1$ and $2$, then $({h'} \circ \rho_{\ell})(s',t') = s'(t')^2 \delta(s',t')$  where  $\delta$ is a unit in $\C\{s',t'\}$ {and $h'$ is as before}. Since $\rho_{\ell} \circ \ell'_n = \ell \circ \pi_0$ on the germ $(X_0,p)$, we then obtain $st^2 \zeta(s,t) = s^{\alpha} t^{2\beta} \delta(s^{\alpha}, t^{2\beta})$. This implies $\alpha = \beta = 1$, i.e., $\ell'_n \colon (X_0,p) \to (Y_{\ell},q)$ is the germ of an  isomorphism. 

We now have to prove that the inner rate $q_w$ of the vertex obtained by blowing up the edge $(w_1)-(w_2)$ equals $n+1/3$.  Let   $e_p \colon X_p \to  X_0$ be the blow-up of $p$ and consider the  $\Bbb P^1$-curve  $E_w= e_p^{-1}(p)$.  Let $e_q \colon Y_q \to Y_{\ell}$ be the blow-up of $q$,   so    $C_u = e_q^{-1}(q)$.  We then have an induced isomorphism $\ell''_n \colon N(E_w) \to N(C_u)$  from a neighbourhood $N(E_w)$ of $E_v$ to a neighbourhood $N(C_u)$ of  $C_u$.  Therefore,   $q_w=q_u=  n+1/3$.
\end{proof}

\section{Minimal implies LNE} \label{sec: main proof}

In this Section, we prove the  ``if'' direction of Theorem \ref{theorem:main}]. We start by proving two preliminary results.
\subsection{Blow-up of a minimal singularity} \label{subsec: blow-up minimal}

The following result is  \cite[Th\'eor\`eme 5.9]{BL}. The authors prove it there without using the existence of a resolution of $(X,0)$. We give here a short proof using this fact.

{\begin{proposition}  \label{lem:generic2}   Let  $(X,0)$ be a minimal surface singularity and let $e' \colon X' \to X$ be the blow-up of the origin. Then $X'$ is normal and for every  singular point $p \in X'$,  $(X',p)$ is a minimal singularity. 
 \end{proposition}

\begin{proof}[of Proposition \ref{lem:generic2}]  Since  $(X,0)$ is minimal, then $(X,0)$ is a rational singularity and then,  its blow-up is normal (\cite{Tyu}).

Let $\pi \colon  \widetilde{X}  \to X$ be the minimal resolution of $(X,0)$ and let
$G$ be its resolution graph. Since $(X,0)$ is rational, then
$\pi$ factors through the blow-up of the maximal ideal 
(\cite{A}). Assume $(X',p)$ is not smooth. Then $(X',p)$ has minimal resolution graph  one of the connected components
$\cal G$ of $G$ minus the $\cal L$-nodes. So $\cal G$ is a rational graph and $(X',p)$ is rational. Moreover, since the minimal cycle of $(X,0)$ is reduced, the minimal cycle of $(X',p)$ is  also reduced. Then, by Proposition \ref{prop:minimal iff rational and reduced fundamental cycle}, $(X',p)$ is  a minimal surface singularity. 
\end{proof}}

\subsection{Outer contact between curves and blow-up} \label{subsec: outer contact and blow-up}
\begin{proposition} \label{prop:formula}
Let $(\gamma_1,0)$  and $(\gamma_2 ,0)$  be two complex curve germs in $ (\C^N,0)$.  Let
$\widetilde{e}:(Z,E)\to (\C^N,0)$ be the blow-up of the origin and let
$\gamma_i^*$ be the strict transform of $\gamma_i$ by $\widetilde{e}$. Assume that
$q_{out}(\gamma_1,\gamma_2)\geq 2$. Then $\gamma_1^*\cap\gamma_2^* =p\neq
\emptyset$ and 
$
q_{out}(\gamma_1^*,\gamma_2^*)= q_{out}(\gamma_1,\gamma_2)-1
$, where $q_{out}(\gamma_1^*,\gamma_2^*)$ means outer contact in the germ $(Z,p)$. \end{proposition}

\begin{proof} Let $m_i$ be the multiplicity of $\gamma_i$ for $i=1,2$.  Set $q_o = q_{out}(\gamma_1,\gamma_2)$. We can choose
coordinates $(z_1,\ldots,z_N)$ for $\C^N$ and Puiseux 
parametrizations of $\gamma_i$ as follows:
\begin{align}
\gamma_i \colon t \in \C \mapsto \big(t,\gamma_{i,2}(t),\dots, \gamma_{i,N}(t)\big)
\end{align}
where $\gamma_{i,j}(t) \in \C\{t^{\frac{1}{m_i}}\}$ are fractional power series whose terms with degrees $ < q_o$ all  coincide while there are at least two coefficients of $t^{q_0}$ which differ.

 Let us express $\widetilde{e}$ in the chart $U_1\subset Z$ over $z_1 \neq 0$. In the corresponding local coordinates $(z_1,u_2,\ldots,u_N)$ of  $U_1\cong \C^N$, we have   ${\widetilde{e}}(z_1,u_2,\dots,u_N) =
(z_1,z_1u_2,\dots, z_1u_N)$ and the exceptional divisor
$E={\widetilde{e}}^{-1}(0)$ has equation $z_1=0$. In this chart,  the strict transform $\gamma_i^*$ is parametrized by: 
$\gamma_i^*(t)=(t, \gamma_{i,2}^*(t), \dots,
\gamma_{i,N}^*(t))$  with  $\gamma_{i,j}(t)=t\gamma_{i,j}^*(t)$ for all 
$j>1$. This implies: 
\begin{align*}
\norm{\gamma_1(t)-\gamma_2(t)} &=\norm{\big(t-t,t\gamma_{1,2}^*(t)
  -t\gamma_{2,2}^*(t),\dots, t\gamma_{1,N}^*(t) -t\gamma_{2,N}^*(t)\big)} \\
&=\lvert t \rvert \norm{\gamma_1^*(t) -\gamma_2^*(t)}. 
\end{align*}
Since $q_o\geq 2$, 
then all $\gamma_{i,j}^*(t)$ only have terms of degree $\geq
 1$, and hence $\norm{\gamma_i^*(t)} = \Theta(\vert t \vert)$. Therefore  $
 \norm{\gamma_1^*(t)  -\gamma_2^*(t)}) =
 \Theta(t^{q_{out}(\gamma_1^*,\gamma_2^*)})$ and  then  $\norm{\gamma_1(t)-\gamma_2(t)}  =\Theta(|t|^{ q_{out}(\gamma_1^*,\gamma_2^*) +1})$. On the other hand, we have  
 $\norm{\gamma_1(t)-\gamma_2(t)}  = \Theta(|t|^{q_o})$. Hence 
  $q_o = 1 + q_{out}(\gamma_1^*,\gamma_2^*)$. 
\end{proof}

\begin{proof}[of the  ``if'' direction of Theorem \ref{theorem:main}]

Let $(X,0)$ be a minimal surface singularity with generic   projection
$\ell \colon (X,0)\to (\C^2,0)$, let $\Pi$ be its polar curve and let $\Delta = \ell(\Pi)$ be its discriminant curve.  {We use again the notations of Section \ref{sec:characterization LNE}. Let  $\rho_{\ell} \colon  {Z}_{\ell} \to \C^2$ be the  sequence of blow-ups of points introduced in Section \ref{sec:characterization LNE}.} We also use the notations introduced in the previous Section, in particular,  $\Gamma_0$ is the dual graph of the LNE$_{test}$-resolution  of $(X,0)$ (Definition \ref{def:LNE test resolution}), and $G_0$ is the graph of the resolution $\pi_0 \colon X_0 \to X$.  

We have to check Conditions  ($1^*$) and ($2^*$) 
of Theorem  \ref{cor:complex characterization of normal embedding} for any 
test curve $(\gamma,0) \subset (\C^2,0)$ (Definition \ref{def:test curve}).  {We   have to consider the following cases for the values of the inner rate $q_{\gamma}$ (see proof of Proposition \ref{innergeomertythm}):
\begin{itemize}
\item Case (1{a}).  $q_\gamma$   is an integer $n$ (in particular, $\gamma$ is smooth); 
\item Cases (1b) and (2a).  $q_\gamma=n+1/2$ with $n \geq 1$ an integer; 
\item  Case (2b).   $q_\gamma =  n +1/3$ with $n \geq 1$ an integer.
  \end{itemize}}

Assume first that $q_{\gamma}=1$. Then $\gamma$ is a generic line through the origin of $\C^2$, so  $(\ell^{-1}(\gamma),0)$ is a generic hyperplane section of $(X,0)$.  Since $(X,0)$ is minimal,  the generic hyperplane section $(\ell^{-1}(\gamma),0)$  also has a minimal  singularity  (\cite[Lemma 3.4.3]{K}) so  it  is a union of $m(X,0)$ smooth transversal curves, where $m(X,0)$ denotes the multiplicity of $(X,0)$. {Then the multiplicity of every component of   $(\ell^{-1}(\gamma),0)$ equals $1$}, and if $\gamma_1$ and $\gamma_2$ are two components of $(\ell^{-1}(\gamma),0)$, then $q_{inn}(\gamma_1,\gamma_2) = q_{out}(\gamma_1,\gamma_2)=1$. Therefore   Conditions ($1^*$) and ($2^*$) of Theorem \ref{cor:complex characterization of normal embedding} are satisfied. 

We then have to prove Conditions ($1^*$) and ($2^*$)  for any test curve which is not a curvette of the root vertex. 
\vskip0,2cm\noindent
 Let us first prove that any test curve satisfies Condition  ($1^*$).

In case (1a),  $\gamma$ is a smooth curve, so $mult(\gamma)=1$. On the
other hand,  $q_{\gamma} \in \N$ means that $E$ is a component of the
irreducible divisor of the minimal resolution $\pi \colon
\widetilde{X} \to X$ of $X$. Since $(X,0)$ is minimal, its minimal
cycle is reduced. The maximal cycle is the compact part of the total
transform by $\pi$ of a generic linear function $h \colon (X,0) \to
(\C,0)$.  Then in particular, {since for rational and hence
  minimal singularities the maximal cycle and the minimal cycle agrees}, we have $mult_E(h)=1$, which means that  the multiplicity of any curvette of $E$ is $1$. Therefore $mult(\widehat{\gamma}) = 1$.

In cases (1b) and (2a), we have $mult(\gamma)=2$. On the other hand, $E$ is obtained by blowing up the  intersection  point $p$ of two exceptional curves $E_{v_1}$ and  $E_{v_2}$  corresponding to {vertices} of the graph of the minimal resolution of $X$ {(see the proof of Proposition \ref{innergeomertythm} for Case (2a))}. Since  $Z_{min}$ is reduced, then $mult_{E_{v_1}}(h) = mult_{E_{v_2}}(h) =1$. Since the strict transform of $h$ does not pass through $p$, then $mult_E(h) = mult_{E_{v_1}}(h) + mult_{E_{v_2}}(h) = 2$, which means that $mult(\widehat{\gamma}) = 2$. 

In case (2b), the multiplicity of $\gamma$ equals $3$, since $C$ is obtained by blowing up the intersection point between two exceptional components, one along which a generic linear form on $(\C^2,0)$ has multiplicity $1$  and the other $2$. On the other hand, $E$ is obtained by blowing up the  intersection point $p$ of two exceptional curves $E_{v_1}$ and  $E_{v_2}$ such that  $mult_{E_{v_1}}(h)=1$  and  $mult_{E_{v_1}}(h)=2$ {(see again the proof of Proposition \ref{innergeomertythm})}. Therefore  $mult_E(h) = mult_{E_{v_1}}(h) + mult_{E_{v_2}}(h) = 1+2=3$.  
\vskip0,2cm\noindent
If $(X,0)$ is a minimal singularity, we denote by $R(X)$ the minimal integer such that all test curves of $(X,0)$ have inner rates $<R(X)+1$.    We will achieve the proof that $(X,0)$ is LNE  by induction on $R(X)$. 

{We start with a lemma which will imply the first step of the induction and which will also be used in the induction step. 

\begin{lemma} \label{lem:first step} Let $(X,0)$ be a minimal singularity and   let $\ell (X,0) \to (\C^2,0)$ be a generic projection. Then every  test curve $\gamma$ for $\ell$ such that $q_{\gamma} \in \{1,3/2,4/3\}$ satisfies Condition  ($2^*$). 
\end{lemma}}

\begin{proof}
The case $q_{\gamma}=1$ has already been treated at the beginning of the proof. 

  Assume  that $q_{\gamma}=3/2$ and that we are in Case (1b), so $\gamma$ is a test curve at a $\Delta$-node $u$ of $T'$ such that $\widehat{s}(u)=3/2$. {By \eqref{cor:(ii)} of Corollary \ref{cor:graph map}), there is a   unique vertex $(v)$ in $G_0$ such that $L(v)=u$} and $(v)$ is  a $\cal P$-node obtained by blowing-up a central edge between two $\cal L$-nodes. Then $\ell^{-1}(\gamma)$ contains two principal components  $\widehat{\gamma}_1$ and $\widehat{\gamma}_2$ whose strict transforms by $\pi_0$ are curvettes of the curve $E_v$ meeting  $E_v$ at distinct points. We then have $q_{inn}(\widehat{\gamma}_1,\widehat{\gamma}_2)=3/2$. Let $\ell' \colon (X,0) \to (\C^2,0)$ be a generic projection for $(X,0)$ which is also generic for the curve $\widehat{\gamma}_1 \cup \widehat{\gamma}_2$. Then the restriction of $\ell'$ to $\widehat{\gamma}_1 \cup \widehat{\gamma}_2$ is a bilipschitz homeomorphism to its image, so we have $q_{out}(\widehat{\gamma}_1,\widehat{\gamma}_2) = q_{out}( \ell'(\widehat{\gamma}_1),\ell'(\widehat{\gamma}_2))$. But since $\ell'(\widehat{\gamma}_1)$ and $\ell'(\widehat{\gamma}_2)$ are two curvettes of the component $C'_u$ of $\rho_{\ell'}^{-1}(0)$ meeting $C'_u$  at distinct points, then $ q_{out}( \ell'(\widehat{\gamma}_1),\ell'(\widehat{\gamma}_2))$ equals the inner rate of $C'_u$, which is $3/2$. We then have obtain $q_{out}(\widehat{\gamma}_1,\widehat{\gamma}_2) = q_{inn}(\widehat{\gamma}_1,\widehat{\gamma}_2)$. 
  
 Assume  that $q_{\gamma}=3/2$ and that we are in Case (2a),  so $\gamma$ is a test curve at a vertex  $(u)$ of $T'$ obtained by blowing up the intersection point between the curve $C_{u'}$ of $\rho_{\ell}^{-1}(0)$ corresponding to the root-vertex $(u')$ of $T$ and a $\Delta$-curve $C_{u''}$ with inner rate $2$. 
    
  Let $\widehat{\gamma}_1, \widehat{\gamma}_2$ be a pair of principal components of $\ell^{-1}(\gamma)$ and let $(v_1)$ and $(v_2)$ be the two vertices of $\Gamma_0$ such that the strict transforms of  $\widehat{\gamma}_1, \widehat{\gamma}_2$ are curvettes   of  $E_{v_1}$ and  $E_{v_2}$ respectively.   By Proposition \ref{innergeomertythm}, $(v_1)$ and $(v_2)$ are obtained by blowing up  intersection points  $E_{v'_1} \cap E_{v''_1}$ and  $E_{v'_2} \cap E_{v''_2}$ such that $(v'_1)$ and $(v'_2)$ are $\cal L$-nodes and $(v''_1)$ and $(v''_2)$ are vertices of $G_0$ such that $L(v''_1) = L(v''_2)=(u'')$. Moreover, we have $(v'_1) \neq (v'_2)$.

  {${\mu_0\colon W_0}\to X$ factors through $NX$, so we have a composition ${W_0}\overset{{\tilde\mu_0}}\rightarrow NX\overset{\mathscr N}\rightarrow X$. Since  $E_{v_1}$ and $E_{v_2}$ are not components of the exceptional divisor of $\mathscr N$, the lifted Gauss map $\widetilde{\lambda} \colon NX \to \grassman(2,{\C^n})$  {(see Definition \ref{def:lifted Gauss map})} is constant on each subset ${\tilde\mu_0}(E_{v_1})$ and ${\tilde\mu_0}(E_{v_2})$ of $NX$.
 By  \cite[Proposition 7.2]{NPP}, in order to prove   Condition
 ($2^*$), i.e.,  $q_{inn}(\widehat{\gamma_1}, \widehat{\gamma_2}) =
 q_{out}(\widehat{\gamma_1}, \widehat{\gamma_2})$, it suffices to
 prove that  ${\widetilde\lambda}({\tilde\mu_0}(E_{v_1})) \neq
 {\widetilde\lambda}({\tilde\mu_0}(E_{v_2}))$,   i.e., that one
 has distinct values at the two points $p_1 =
 {E_{{v_1''}}} \cap E_{v'_1}
 { = \beta_0(E_{v_1})}$ and $p_2 = {E_{{v_2''}}}
 \cap E_{v'_2} {= \beta_0(E_{v_1})}$ of $\pi_0^{-1}(0)$. 
}

Let $\widetilde{e} \colon Y \to NX$ be the blow-up of the ideal $\cal M \cal O_{NX}$ and let $\mathscr N' \colon Y \to \tilde{X}$ be the morphism induced by the universal property of the blow-up, so we have a commutative diagram:
$$\xymatrix{
X \times  \grassman(2,\C^n) \times \mathbb P^{n-1}\supset 
 &Y \ar[r]^{\widetilde{e}}\ar[d]_{\mathscr N'} & NX  \ar[d]^{\mathscr N} & \subset X \times  \grassman(2,\C^n) \\
X\times \mathbb P^{n-1}  \supset  & \tilde{X} \ar[r]^e  & X&
}
$$
  where $\widetilde{e}$ and $\mathscr N'$ are the restriction to $Y$ of the canonical projections of $X \times  \grassman(2,\C^n) \times \mathbb P^{n-1}$ to  $X \times  \grassman(2,\C^n)$
 and  $X\times \mathbb P^{n-1}$  respectively. 

The curve  $e^{-1}(0)$ is isomorphic to the projectivized tangent cone
$C_0(X) \subset \mathbb P^{n-1}$ of $(X,0)$.  By  \cite[Theorem
3.4.6]{K}, the  projectivized tangent cone of a minimal singularity
consists of rational components and has only minimal singularities. So
the two curves $\mathscr N'({E_{v'_1}})$ and $\mathscr N'({E_{v'_2}})$  are two irreducible  components of  the projectivized tangent cone
intersecting transversely at a point $p$. Let $P_1$ and $P_2$ be the tangent lines at $p$ respectively to   $\mathscr N'({E_{w_1}})$ and $\mathscr N'({E_{w_2}})$, so we have $P_1 \neq P_2$.  {Let us denote by $P1 \in \grassman(2,\C^n)$ and $P2 \in \grassman(2,\C^n)$ the unprojectivized versions of  $P_{1}$ and $P_{2}$.}     Let $l_1,\ldots,l_r$ be the points of $C_0(X)$ which correspond to the exceptional tangent of $(X,0)$, so $p$ is one of them  {(see \cite{LT})}. 
Let  $(x_n)_n$ be a sequence of points on ${E_{w_1}}$ converging to $p_1$ such that none of $\mathscr N'(x_n)$ is an exceptional line. By \cite[Theorem 2.3.7]{L00},   $\widetilde{\lambda}(x_n)$ is the element of $\grassman(2,{\C^n})$ defined by the tangent line to $C_0(X)$ at $\mathscr N'(x_n)$. Taking the limit $x_n \to p_1$, we then obtain by continuity of $\widetilde{\lambda}$ that  $\widetilde{\lambda}(p_1) = P_{1}$. Similarly, we have  $\widetilde{\lambda}(p_2) = P_{2}$.  This proves $\widetilde{\lambda}(p_1) \neq \widetilde{\lambda}(p_2)$ which completes the proof of Condition ($2^*$) for $q_{\gamma}=3/2$ in case (2a).

 Finally assume $q_{\gamma}=4/3$.  Then $\gamma$ is a curvette of a component $C_u$ obtained by blowing up the intersection point between the root-curve $C_{u'}$ of $\rho_{\ell}^{-1}(0)$  and  a $\Delta$-curve  $C_{u''}$ with inner rate $3/2$.  {Using again \eqref{cor:(ii)} of Corollary \ref{cor:graph map}), there is a unique  vertex $(v'')$ in $G_0$ such that $L(v'')=u''$ and it is a $\cal P$-node $(v'')$}  obtained by blowing-up    a central edge  $(v'_1)-(v'_2)$  between  two $\cal L$-nodes in $G_0$, and $L^{-1}(u)$ consists in the two vertices $(v_1)$ and $(v_2)$  in $\Gamma_0$ obtained by blowing-up the two edges  $(v'_1)-(v)-(v'_2)$. Then we obtain the following string  in $\Gamma_0$:  $$(v'_1)-(v_1)-(v'')-(v_2)-(v'_2),$$
 and $\ell^{-1}(\gamma)$ contains exactly two principal components $\widehat{\gamma}_1, \widehat{\gamma}_2$ whose strict transforms are curvettes of $E_{v_1}$ and $E_{v_2}$ respectively.  Set $p_1 = E_{v'_1} \cap E_{v_1}$ and $p_2 = E_{v'_2} \cap E_{v_2}$. By the same argument as in the previous case, we have $\widetilde{\lambda}(p_1) \neq \widetilde{\lambda}(p_2)$, and this implies that the  pair  $\widehat{\gamma}_1, \widehat{\gamma}_2$ satisfies Condition ($2^*$). \end{proof}

\vskip0,2cm\noindent
{\bf First step of the induction.}
Assume that $R(X)  =1$.   We already know that every  test curve satisfies Condition ($1^*$).  Moreover,  the only possible values for inner rates of   test curves are $q_{\gamma} \in \{1,3/2,4/3\}$. Then by Lemma \ref{lem:first step}, every  test curve $\gamma$ satisfies  Condition  ($2^*$).

 \vskip0,2cm\noindent
{\bf Induction step.}   
We assume that all minimal singularities $(X,0)$ with $R(X) \leq n-1$ are LNE. Let $(X,0)$ be a minimal singularity with $R(X) = n$. {Consider a generic projection $\ell \colon (X,0) \to (\C,0)$ and a   test curve $(\gamma,0)$ for $\ell$ which is not a curvette of $e^{-1}(0)$.  We already know that every  test curve satisfies Condition ($1^*$). We have to prove that $(\gamma,0)$ satisfies  Condition  ($2^*$).

If $q_{\gamma} <2$, i.e., $q_{\gamma} \in \{1,3/2,4/3\}$, then  $\gamma$ satisfies  Condition  ($2^*$) by Lemma   \ref{lem:first step}.

Assume now that $q_{\gamma} \geq 2$ and let  $\widehat{\gamma}_1, \widehat{\gamma}_2$ be two principal components of $\ell^{-1}(\gamma)$. Consider the blow-up $e' \colon X' \to X$ and let $\widehat{\gamma}_1^*, \widehat{\gamma}_2^*$ be the strict transform by $e'$. As a direct consequence of \eqref{cor:(ii)} of Corollary \ref{cor:graph map}, $\widehat{\gamma}_1^*, \widehat{\gamma}_2^*$ pass through the same point $p$ of $X'$ which is a singular point of $X'$ having minimal resolution graph one of the components $G_p$ of $G$ minus its $\cal L$-nodes. Moreover, by Proposition \ref{prop:formula}, $(X',p)$ is a minimal singularity. }

  Denote by $s_p\colon V(G_p) \to \N$   the corresponding $s$-map
  defined in  Section \ref{sec:minimal singularities}. It follows
  immediately from Theorem \ref{thm:Pi} that  the  $\cal L$-nodes  of
  $G_p$ are the vertices of $G_p$ adjacent to the $\cal L$-nodes of
  $G_0$. As a consequence of this,  we have for  each vertex
  $(v) \in V(G_p)$, $s_p(v)=s(v)-1$, where $s$ is the $s$-map
  of the initial minimal singularity $(X,0)$. Therefore $R(X') = R(X)
  - 1$, so we can apply the induction hypothesis to $(X',p)$. Since
  $(X',p)$ is LNE we have $q'_{inn}(\widehat{\gamma}_1^*,
  \widehat{\gamma}_2^*)  = {q'_{out}}(\widehat{\gamma}_1^*,
  \widehat{\gamma}_2^*)$, {where $q'_{inn}$ and $q'_{out}$ are the
    inner and outer contacts in the germ $(X',p)$.}
  
  Let $\Gamma_0$ be  again the dual graph of the LNE$_{test}$-resolution  of $(X,0)$ (Definition \ref{def:LNE test resolution}). Then $V(G_0) \subset V(\Gamma_0)$  and by Proposition \ref{innergeomertythm}, the dual graph $\Gamma_p$ of the  LNE$_{test}$-resolution  of $(X',p)$ is the connected component of $\Gamma_0$ minus its $\cal L$-nodes which contains the vertices of $G_p$. If $(w)$ is a vertex of  $\Gamma_0$ (resp. $\Gamma_p$), let us denote by $q_w$ (resp. $q'_w$) the inner rate of $(w)$ in $\Gamma_0$, i.e., with respect to the Lipschitz geometry of $(X,0)$ (resp. in $\Gamma_p$, i.e., with respect to the Lipschitz geometry of $(X',p)$). Since   $s(v)=s_p(v)+1$ for  each vertex $(v)$ of $  G_p$, then  again by Proposition  \ref{innergeomertythm},  we have $q_v=q'_v+1$. Then applying Proposition \ref{lem:inner contact computation}, we obtain:  $q_{inn}(\widehat{\gamma}_1, \widehat{\gamma}_2) = q'_{inn}(\widehat{\gamma}_1^*, \widehat{\gamma}_2^*) +1$. 
  
On  the other hand, by Proposition \ref{prop:formula}, we have $q_{out}(\widehat{\gamma}_1, \widehat{\gamma}_2)  = {q'_{out}}(\widehat{\gamma}_1^*, \widehat{\gamma}_2^*) +1$.  We then obtain  $q_{inn}(\widehat{\gamma}_1, \widehat{\gamma}_2) = q_{out}(\widehat{\gamma}_1, \widehat{\gamma}_2)$. Therefore, $\gamma$ satisfies Condition ($2^*$). 
   
 Summarizing, we have proved that all test curves for $(X,0)$ satisfy Conditions ($1^*$) and ($2^*$). Therefore $(X,0)$ is LNE. This completes the induction step. 
\end{proof}

\section{Rational and LNE implies minimal}\label{sec:rational + NE implies minimal}

In this section, we prove the other direction of Theorem \ref{theorem:main}: any   rational surface singularity which is  LNE is minimal
\begin{remark}  An LNE surface singularity is
  not necessarily minimal.  A counter-example is given by the   (non
  rational) hyper surface in $\mathbb C^3$ with equation  $xy(x+y) +
  z^4=0$.  It is a super isolated singularity. The graph of its minimal
  resolution factorizing through Nash modification has four vertices. It consists
  of a central vertex and three bamboos of length one,  these three
  leaves being the $\cal L$-nodes, and the central vertex the single
  $\cal P$-node.
\end{remark}

\begin{proof}
Let $(\widetilde{X},E)$ be the minimal resolution of $(X,0)$, $Z$ the minimal
cycle and $E=\bigcup E_i$. {Since $(X,0)$ is LNE, then for every generic projection $\ell \colon (X,0) \to (\C^2,0)$, every generic line $(\gamma,0) \subset (\C^2,0)$ satisfies Condition ($1^*$) of Theorem \ref{cor:complex characterization of normal embedding}, i.e., every component of $\ell^{-1}(\gamma)$ has multiplicity $1$. This means that  the multiplicity of $Z$ at every $\cal L$-curve equals $1$. } 
 Consider Laufer's algorithm for
finding $Z$ (\cite[Proposition 4.1]{La}), and let $E_j\subset E$ be the last curve one adds
in the algorithm before one obtains $Z$. Assume that $E_j$ is not an
$\cal L$-curve, so $Z\cdot E_j=0$. Let $Z'$ be the penultimate cycle obtained by Laufer's algorithm. Then $Z'=Z-E_j$ and
$Z'\cdot E_j= -E_j^2>1$ which contradicts $(X,0)$ being rational by
Laufer's criterion \cite[Theorem 4.2]{La}. So the last curve added by Laufer's algorithm is
always an $\cal L$-curve. 

One can always run Laufer's algorithm such that each curve is
added once, before any curve is added a second time. So unless $Z=\sum
E_i$ there
would be an  $\cal L$-curve with multiplicity $>1$, which is a contradiction. Thus $(X,0)$ is minimal.
\end{proof}

\affiliationone{Walter D Neumann\\
Department of Mathematics\\
Barnard College, Columbia University\\
2009 Broadway MC4429\\
New York, NY  10027\\ USA \email{neumann@math.columbia.edu}
} 
\affiliationtwo{Helge M\o ller Pedersen\\
Departamento de Matem\'atica\\ Universidade Federal do Cear\'a\\
Campus do Pici, Bloco 914 \\ CEP 60455-760 \\ Fortaleza, CE \\Brazil \email{helge@mat.ufc.br}}

\affiliationthree{Anne Pichon\\
Aix Marseille Universit\'e, CNRS\\ 
Centrale Marseille, I2M, UMR 7373\\ 
13453 Marseille\\ France \email{anne.pichon@univ-amu.fr}}

\end{document}